\tikzset{labelsize/.style={font=\scriptsize}}
\tikzset{string/.style={very thick}}
\tikzset{
  pto/.style={->,postaction={decorate},
    decoration={
        markings,
        mark=at position 0.5 with {\arrow{|}}}
  },
}
\tikzset{2cell/.style={-implies,double,double equal sign distance,shorten >=9pt, shorten <=10pt}}
\newcolumntype{N}{@{}m{0pt}@{}}
\declaretheorem[style=plain,numberwithin=section,name=Theorem]{theorem}
\declaretheorem[style=plain,sibling=theorem,name=Proposition]{proposition}
\declaretheorem[style=definition,qed=$\blacksquare$,sibling=theorem,name=Definition]{definition}
\declaretheorem[style=definition,qed=$\blacksquare$,sibling=theorem,name=Example]{example}
\declaretheorem[style=definition,qed=$\blacksquare$,sibling=theorem,name=Remark]{remark}
\mathchardef\mhyphen="2D
\newcommand{\Set}{\mathbf{Set}}
\newcommand{\Catone}{\mathbf{Cat}}
\newcommand{\CAT}{{\mathscr{C\!A\!T}}}
\newcommand{\colim}{{\mathrm{colim}}}
\newcommand{\cat}[1]{{\mathcal{#1}}}
\newcommand{\N}{\mathbb{N}}
\newcommand{\nameof}[1]{{\lceil #1\rceil}}
\newcommand{\Yoneda}{\mathbf{y}}
\newcommand{\Contr}{{\mathbf{Contr}}}
\newcommand{\OC}[2]{{\mathbf{OC}(#2)}}
\newcommand{\G}{\mathbb{G}}
\newcommand{\dgraph}[2]{
\begin{tikzpicture}[baseline=-\the\dimexpr\fontdimen22\textfont2\relax ]
	  \node (o) {};
      \node (v) [left =  0.15cm of o]{$#1$};
      \node (e) [right = 0.15cm of o] {$#2$};
      
      \draw [transform canvas={yshift=0.3em},<-]  (v) to (e);
      \draw [transform canvas={yshift=-0.3em},<-] (v) to (e);
\end{tikzpicture}}
\newcommand{\enGph}[1]{{{#1}\mhyphen\mathbf{Gph}}}
\newcommand{\enCat}[1]{{{#1}\mhyphen\mathbf{Cat}}}
\newcommand{\op}{\mathrm{op}}
\newcommand{\phileft}{{\enGph{\omega}/\phi^{(\omega)}_1}}
\newcommand{\Us}{{U_\mathrm{s}}}
\newcommand{\ev}{\mathrm{ev}}
\newcommand{\cart}{{\mathrm{cart}}}
\newcommand{\ob}[1]{{\mathrm{ob}(#1)}}
\newcommand{\ar}[1]{{\mathrm{ar}_{#1}}}
\newcommand{\WkCats}[1]{{\mathbf{Wk}\mhyphen{#1}\mhyphen\mathbf{Cat}_\mathrm{s}}} 
\newcommand{\WkCat}[1]{{\mathbf{Wk}\mhyphen{#1}\mhyphen\mathbf{Cat}}} 
\newcommand{\WkGpds}[1]{{\mathbf{Wk}\mhyphen{#1}\mhyphen\mathbf{Gpd}_\mathrm{s}}} 
\newcommand{\WkGpd}[1]{{\mathbf{Wk}\mhyphen{#1}\mhyphen\mathbf{Gpd}}} 
\newcommand{\Alg}[1]{{{#1}\mhyphen\mathbf{Alg}}}
\newcommand{\Mon}[1]{{\mathbf{Mon}(#1)}}
\newcommand{\id}{\mathrm{id}}
\newcommand{\Id}{\mathrm{Id}}
\DeclareRobustCommand{\rvdots}{%
  \vbox{
    \baselineskip4\p@\lineskiplimit\z@
    \kern-\p@
    \hbox{.}\hbox{.}\hbox{.}
  }}
\newcommand{\pto}{}
\newcommand{\pgets}{}
\DeclareRobustCommand{\pto}{\mathrel{\mathpalette\p@to@gets\to}}
\DeclareRobustCommand{\pgets}{\mathrel{\mathpalette\p@to@gets\gets}}
\newcommand{\p@to@gets}[2]{%
  \ooalign{\hidewidth$\m@th#1\mapstochar\mkern5mu$\hidewidth\cr$\m@th#1\longrightarrow$\cr}%
}
\newcommand{\defemph}[1]{\textbf{#1}}
\begin{document}
\title{Hom weak $\omega$-categories of a weak $\omega$-category}
\author{Thomas Cottrell}
  \address{Department of Mathematical Sciences, University of Bath, Bath, United Kingdom}
  \email{t.p.cottrell@bath.ac.uk}

\author{Soichiro Fujii}
\address{Research Institute for Mathematical Sciences, Kyoto University, Kyoto, Japan}
\email{s.fujii.math@gmail.com}
\date{\today}
\thanks{We gratefully acknowledge the support of Royal Society grant IE160402. The second author is supported by ERATO HASUO Metamathematics for Systems Design Project (No.~JPMJER1603), JST. No data were generated in association with this paper.}

\keywords{Weak $\omega$-category, weak $\omega$-groupoid, weak $\omega$-functor, operad, intensional Martin-L\"of type theory, identity type.}
\subjclass[2020]{18N65, 18N20}
\dedicatory{Dedicated to John Power on the occasion of his 60th birthday.}

\begin{abstract}
Classical definitions of weak higher-dimensional categories are given inductively; for example, a bicategory has a set of objects and hom categories, and a tricategory has a set of objects and hom bicategories. However, more recent definitions of weak $n$-categories for all natural numbers $n$, or of weak $\omega$-categories, take more sophisticated approaches, and the nature of the ``hom’’ is often not immediate from the definitions. In this paper, we focus on Leinster’s definition of weak $\omega$-category based on an earlier definition by Batanin, and construct for each weak $\omega$-category $\mathcal{A}$, an underlying (weak $\omega$-category)-enriched graph consisting of the same objects and for each pair of objects $x$ and $y$, a hom weak $\omega$-category $\mathcal{A}(x,y)$. We also show that our construction is functorial with respect to weak $\omega$-functors introduced by Garner.
\end{abstract}

\maketitle

\section{Introduction}
\label{sec:intro}
\emph{Identity types} are one of the most interesting features of (intensional) Martin-L\"of type theory \cite{NPS_programming}. 
Given any type $A$ and pair of terms $a,b$ of that type, this feature yields the type $\Id_A(a,b)$, which, under the Curry--Howard correspondence, may be regarded as the {logical} proposition $a=b$. Since identity types are themselves types, we can also obtain types of the form $\Id_{\Id_A(a,b)}(p,q)$.
This process leads to an infinite hierarchy of iterated identity types, which has certain structure reminiscent of the familiar reflexivity, symmetry and transitivity properties of equality. 
In fact it forms an internal \emph{weak $\omega$-groupoid} \cite{vandenBerg_Garner,Lumsdaine},
thus suggesting connections to higher-dimensional category theory and homotopy theory \cite{hottbook}.

In order to prove {that certain principles (such as Uniqueness of Identity Proofs) are undecidable in} Martin-L\"of type theory, Hofmann and Streicher \cite{Hofmann_Streicher_groupoid} gave a groupoid model of it. 
Then Warren \cite{Warren_strict_omega_groupoid} gave a strict $\omega$-groupoid model, {which refutes further principles (such as the truncation rules)}. 
In both cases, the interpretations of identity types were given by means of homs; in the former case hom sets of a groupoid seen as discrete groupoids, and in the latter case hom strict $\omega$-groupoids of a strict $\omega$-groupoid. 
With the abovementioned observation that types have the structure of internal weak $\omega$-groupoids in mind, it seems natural to seek a model of Martin-L\"of type theory using (external) weak $\omega$-groupoids, or at least a suitable subclass of them.
This paper is a contribution to this goal; here we shall establish a crucial step for it, by showing that an {(external)} weak $\omega$-groupoid indeed has {(external)} weak $\omega$-groupoids as homs.

\medskip

Weak $\omega$-groupoids are weak $\omega$-categories in which each $k$-cell ($k\geq 1$) is weakly invertible, and in turn, weak $\omega$-categories are a higher-dimensional analogue of categories in which one has $k$-cells for each $k\in\N$ and various composition operations, satisfying the usual category axioms up to coherent higher-dimensional cells. Weak $\omega$-categories can be thought of as a limit of weak $n$-categories for $n\in\N$, in which one only has $k$-cells for $k\leq n$.
Classical definitions of weak $n$-category for small $n$ are well-known: the cases $n=0,1,2$ and $3$ correspond to set, category, bicategory \cite{Benabou_bicat} and tricategory \cite{GPS} respectively.

Subsequently, several definitions of weak $n$-category for arbitrary $n\in\N$, as well as of weak $\omega$-category, have been proposed by various authors; see \cite{Leinster_survey}. In this paper we shall focus on the definition given by Leinster~\cite{Leinster_book} following an earlier definition by Batanin \cite{Batanin_98}, this being the one adopted in the abovementioned papers \cite{vandenBerg_Garner,Lumsdaine}. Hence, more specifically, our main aim is to show that each weak $\omega$-category in the sense of Leinster has weak $\omega$-categories (again, in the sense of Leinster) as homs; a completely parallel argument also shows that a weak $(n+1)$-category has weak $n$-categories as homs for each $n\in\N$. We then observe that this result suitably restricts to weak $\omega$-groupoids.

The reader familiar with the classical definitions of bicategory or tricategory might guess that such a result would be immediate from the definition, as, for example, a tricategory $\cat{T}$ is \emph{defined} as the data consisting of a set of objects $\ob{\cat{T}}$, and for each pair $x,y$ of objects, a \emph{hom bicategory} $\cat{T}(x,y)$, together with various composition operations and coherence cells.  However, the weakened enrichment approach used in the classical definitions requires the coherence cells to be specified explicitly.  
The number of coherence cells needed, and the complexity of those cells, increases rapidly as the dimension increases, and it therefore becomes impractical to maintain this approach when defining weak $n$-categories for arbitrary $n$ or $n=\omega$.

Consequently, the various proposed definitions of weak $n$-category---including Leinster's definition---do not intrinsically include hom weak $(n - 1)$-categories, and they are so different in style to the classical definitions that our result is not at all immediate.\footnote{One arguable exception is Trimble's definition, which does take a weakened enrichment approach.  However, Trimble's approach differs from the classical definitions in other important ways.  In particular the resulting $n$-categories are not fully weak as they do not have weak interchange, and indeed Trimble described them as ``flabby $n$-categories'' rather than ``weak $n$-categories'' \cite{Leinster_survey}.}  Indeed, to the best of our knowledge no one seems to have written down a detailed proof of this result, either in the case of $\omega$ or $n\in\N$.  Our result would be vital not only for a semantics of identity types, but also for any serious development of a theory of weak higher-dimensional categories.

Leinster's definition of weak $\omega$-category starts from the underlying structure of \emph{$\omega$-graph}, which simply consists of cells of various dimensions together with suitable boundary (source and target) information. The weak $\omega$-categories are defined as the Eilenberg--Moore algebras of a suitable monad on the category of $\omega$-graphs; using the terminology we shall explain in Section \ref{sec:Leinsters_def}, this monad is induced by the \emph{initial $T^{(\omega)}$-operad with contraction}.
We denote the Eilenberg--Moore category of the monad for weak $\omega$-category by $\WkCats{\omega}$. 
Our main construction amounts to defining a suitable ``forgetful'' functor 
\[
\Us\colon\WkCats{\omega}\longrightarrow \enGph{(\WkCats{\omega})},
\]
where the codomain is the category of ($\WkCats{\omega}$)-enriched graphs (cf.~Definition \ref{def:V-graph}); applying $\Us$ to a weak $\omega$-category $\cat{A}$, we obtain the ($\WkCats{\omega}$)-enriched graph consisting of the same objects and for each pair of objects $x$ and $y$, a hom weak $\omega$-category $\cat{A}(x,y)$.
The key to the definition of $\Us$ is the observation that both the domain and the codomain of $\Us$ are monadic over the category of $\omega$-graphs. We shall induce $\Us$ from a canonical monad morphism, which is ultimately induced by the \emph{initiality} used to determine the monad for weak $\omega$-category.

The morphisms of the Eilenberg--Moore category $\WkCats{\omega}$ are called \emph{strict $\omega$-functors}, because they preserve the structures of weak $\omega$-categories strictly. 
In the context of weak higher-dimensional categories, however, a more natural notion of functor is one that preserves the structures up to coherent weakly invertible cells.
Garner \cite{Garner_homomorphisms} introduced such functors between weak $\omega$-categories, which we call \emph{weak $\omega$-functors}.
Our construction of hom weak $\omega$-categories is compatible with weak $\omega$-functors. That is, denoting the category of weak $\omega$-categories and weak $\omega$-functors by $\WkCat{\omega}$, we show that the functor $\Us$ extends to a functor
\[
U\colon \WkCat{\omega}\longrightarrow\enGph{(\WkCat{\omega})}.
\]
This means that from a weak $\omega$-functor $F\colon \cat{A}\longrightarrow\cat{B}$, we can extract its action on homs as a family of weak $\omega$-functors $(F_{x,y}\colon\cat{A}(x,y)\longrightarrow\cat{B}(Fx,Fy))_{x,y\in\ob{\cat{A}}}$.

\medskip

In Section \ref{sec:Leinsters_def} we review Leinster's definition of weak $\omega$-category.
Then in Section \ref{sec:forgetful_strict} we  construct the forgetful functor $\Us$. 
Section \ref{sec:Garner_weak_functor} provides a definition of weak $\omega$-functor due to Garner. In Section \ref{sec:forgetful_weak} we construct the extension $U$ of $\Us$. 
In the final Section \ref{sec:wk_omega_groupoid} (which can be read independently of Sections \ref{sec:Garner_weak_functor} and \ref{sec:forgetful_weak}) we review a definition of weak $\omega$-groupoid and observe that our construction suitably restricts to this case.

{
\subsection*{Related work}
At the beginning of the introduction, we mentioned several papers on the border between higher-dimensional category theory and type theory, in order to motivate the problem we shall treat in this paper from a computer science perspective. Although the rest of this paper does not use any type theory, we shall mention another, related line of research connecting these two subjects. The papers we have in mind aim to formalise the definitions of weak $\omega$-groupoid \cite{Altenkirch_Rypecek,Brunerie_thesis} and of weak $\omega$-category \cite{Finster_Mimram,Benjamin_thesis,Benjamin_Finster_Mimram_weak_omcat} via suitable dependent type theories.
(As Cartmell \cite{Cartmell_GAT} has shown, dependent type theories can be used as presentations of (generalised) algebraic theories.)

In a sense, the papers in this line of research are complementary to those mentioned at the beginning of the introduction: the former capture given higher-dimensional categorical structures by designing suitable type theories, whereas the latter relate suitable higher-dimensional categorical structures to a given type theory, namely Martin-L\"of type theory. The definitions of weak $\omega$-groupoid and of weak $\omega$-category adopted in the papers \cite{Brunerie_thesis,Finster_Mimram,Benjamin_thesis,Benjamin_Finster_Mimram_weak_omcat} are those of Grothendieck--Maltsiniotis \cite{Grothendieck,Maltsiniotis}, whose relationship to the Batanin--Leinster definitions has been studied by Ara \cite{Ara_thesis}. 
}

\subsection*{Acknowledgement}
We thank John Power for the countless discussions on weak higher-dimensional categories we have had in Bath, Tokyo and Kyoto. 
{Anonymous referees provided us with detailed lists of comments, which helped to improve the presentation.}

\section{Leinster's definition of weak \texorpdfstring{$\omega$}{omega}-category}
\label{sec:Leinsters_def}
In this section we review Leinster's definition
of weak $\omega$-category \cite{Leinster_book}, which was introduced as a variant of an earlier definition by Batanin \cite{Batanin_98}.
According to this definition, the weak $\omega$-categories are the Eilenberg--Moore algebras of a monad on the category $\enGph{\omega}$ of \emph{$\omega$-graphs}, which is (up to equivalence) the presheaf category over a simple category.
The monad for weak $\omega$-categories is defined by means of the following two notions:
(i)~\emph{globular operads},  and 
(ii)~\emph{contractions}.
For conceptual clarity, we present them as instances of simpler notions, namely:
(i')~\emph{operads over a cartesian monad},
following \cite{Leinster_book},
and (ii')~\emph{choices of diagonal-fillers}, following 
\cite{Garner_univ}.

\subsection{Operads over a cartesian monad}
\label{subsec:operad_over_cart_monad}
For any category $\cat{C}$, the category $[\cat{C},\cat{C}]$ of all endofunctors on it admits a natural (strict) monoidal structure, given by composition of endofunctors. Monoids in $[\cat{C},\cat{C}]$ are monads on $\cat{C}$. 
Now let $\cat{C}$ be a category with finite limits. 
We consider the monoidal subcategory $[\cat{C},\cat{C}]_\cart$ of $[\cat{C},\cat{C}]$ defined as follows.
\begin{itemize}
	\item An endofunctor on $\cat{C}$ is in $[\cat{C},\cat{C}]_\cart$ if and only if it preserves all pullbacks. 
	\item A natural transformation between pullback preserving endofunctors on $\cat{C}$ is in $[\cat{C},\cat{C}]_\cart$ if and only if it is \defemph{cartesian}, meaning that all its naturality squares are pullback squares in $\cat{C}$.
\end{itemize}
Monoids in $[\cat{C},\cat{C}]_\cart$ are called \defemph{cartesian monads} on $\cat{C}$.

Let us recall some standard facts about slice categories of a monoidal category.
\begin{proposition}
\label{prop:slice_of_mon_cat}
	Let $\cat{M}$ be a monoidal category and $T$ be a monoid in $\cat{M}$.
	\begin{enumerate}
		\item The slice category $\cat{M}/T$ acquires a canonical monoidal structure in such a way that the forgetful functor $\cat{M}/T\longrightarrow \cat{M}$ is strict monoidal.
		\item {The canonical functor $\Mon{\cat{M}/T}\longrightarrow \Mon{\cat{M}}/T$ is an isomorphism of categories. That is, to give a monoid in $\cat{M}/T$ is equivalent to give a monoid $T'$ in $\cat{M}$ together with a monoid morphism $T'\longrightarrow T$.}
	\end{enumerate}
\end{proposition}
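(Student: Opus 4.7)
The plan for part~(1) is to construct the monoidal structure on $\cat{M}/T$ explicitly. On objects, set $(X,f) \otimes (Y,g) := (X \otimes Y, \mu \circ (f \otimes g))$, where $\mu \colon T \otimes T \to T$ is the multiplication of $T$; the monoidal unit is $(I, \eta)$ where $\eta \colon I \to T$ is the unit of $T$. On morphisms, the tensor product is inherited from $\cat{M}$; that it lands in $\cat{M}/T$ is a direct consequence of bifunctoriality of $\otimes$ in $\cat{M}$. The associator and unitors are simply those of $\cat{M}$; the verifications that they are morphisms in $\cat{M}/T$ are precisely the associativity and unit axioms of the monoid $T$. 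Finally, the pentagon and triangle identities in $\cat{M}/T$ follow from those in $\cat{M}$ because the forgetful functor $\cat{M}/T \longrightarrow \cat{M}$ is faithful. With these definitions, the forgetful functor is literally strict monoidal: it sends $(X,f)\otimes(Y,g)$ to $X\otimes Y$ on the nose and the unit to $I$.

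For part~(2), the plan is to unfold the definition of a monoid in $\cat{M}/T$. Such a monoid consists of an object $(T', \phi\colon T' \to T)$ together with morphisms $m' \colon (T',\phi) \otimes (T',\phi) \to (T',\phi)$ and $e' \colon (I,\eta) \to (T',\phi)$ in $\cat{M}/T$ satisfying the monoid axioms. By faithfulness of the forgetful functor, the underlying data $(T', m' \colon T' \otimes T' \to T',\, e' \colon I \to T')$ is exactly a monoid in $\cat{M}$, and the axioms in $\cat{M}/T$ correspond bijectively to those in $\cat{M}$. The conditions that $m'$ and $e'$ are morphisms in $\cat{M}/T$ unwind respectively to $\phi \circ m' = \mu \circ (\phi \otimes \phi)$ and $\phi \circ e' = \eta$, which together say exactly that $\phi$ is a monoid morphism $T' \to T$ in $\cat{M}$. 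A morphism of monoids in $\cat{M}/T$ is a morphism in $\cat{M}/T$ respecting $m'$ and $e'$, which by the same unwinding is precisely a monoid morphism in $\cat{M}$ that commutes with the chosen maps to $T$, i.e.\ a morphism in $\Mon{\cat{M}}/T$. Functoriality and mutual inverseness of the two assignments are then immediate from the definitions.

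There is no real technical obstacle; the statement is essentially a definitional unpacking, and the only thing to be careful about is to bookkeep the coherence data: in particular, to check that the associator and unitors of $\cat{M}$, when viewed as morphisms of $\cat{M}/T$, commute with the structure maps to $T$. These verifications reduce to the monoid axioms of $T$, applied after pasting the two commuting triangles defining the objects involved. Once this is done, parts~(1) and~(2) are simultaneously established, and the isomorphism of~(2) is in fact an identity-on-underlying-data.
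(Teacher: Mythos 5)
Your proof is correct and is exactly the standard argument: the explicit tensor $(X,f)\otimes(Y,g)=(X\otimes Y,\mu\circ(f\otimes g))$ with unit $(I,\eta)$, coherence inherited via the faithful forgetful functor, and the definitional unwinding of monoids in the slice as monoid morphisms into $T$. The paper states this proposition without proof as a recalled standard fact, so there is no divergence to report; the only detail worth making explicit is that checking the associator and unitors are morphisms in $\cat{M}/T$ uses their naturality in $\cat{M}$ together with (not solely) the monoid axioms of $T$.
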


Suppose $T=(T,\eta,\mu)$ is a cartesian monad on $\cat{C}$.
By Proposition \ref{prop:slice_of_mon_cat} (i), we obtain a monoidal category $[\cat{C},\cat{C}]_\cart/T$, and by Proposition \ref{prop:slice_of_mon_cat} (ii), a monoid in $[\cat{C},\cat{C}]_\cart/T$ is equivalent to a cartesian monad $T'$ on $\cat{C}$ equipped with a \emph{cartesian} monad morphism $T'\longrightarrow T$ (i.e., a monad morphism which is cartesian as a natural transformation).

Let $1$ denote the terminal object of $\cat{C}$.
The fundamental fact for the theory of operads over a cartesian monad is that the functor 
\begin{equation}
\label{eqn:ev_1}
	\ev_1\colon [\cat{C},\cat{C}]_\cart/T\longrightarrow \cat{C}/T1
\end{equation}
mapping $(\phi\colon F\longrightarrow T)\in [\cat{C},\cat{C}]_\cart/T$ to $(\phi_1\colon F1\longrightarrow T1)\in \cat{C}/T1$, is an equivalence of categories \cite{Kelly_club_data_type}.
The quasi-inverse of $\ev_1$ is given by mapping $(\ar{P}\colon P\longrightarrow T1)\in\cat{C}/T1$ to $(\overline{\ar{P}}\colon \overline{P}\longrightarrow T)\in [\cat{C},\cat{C}]_\cart/T$ determined by the pullback for each $C\in \cat{C}$:
\begin{equation*}
\begin{tikzpicture}[baseline=-\the\dimexpr\fontdimen22\textfont2\relax ]
      \node(00) at (-0.5,1) {$\overline{P}C$};
      \node(01) at (1.5,1) {$TC$};
      \node(10) at (-0.5,-1) {$P$};
      \node(11) at (1.5,-1) {$T1$};
      
      \draw [->] (00) to node[auto, labelsize] {$(\overline{\ar{P}})_C$} (01); 
      \draw [->] (01) to node[auto, labelsize] {$T!$} (11); 
      \draw [->] (00) to node[auto,swap,labelsize] {$\overline{P}!$} (10); 
      \draw [->] (10) to node[auto,swap,labelsize] {$\ar{P}$} (11); 
      \draw (-0.2,0.2) to (0.3,0.2) to (0.3,0.7);
\end{tikzpicture}
\end{equation*}
(the notation $\ar{P}$ is for \emph{arity}, because as we shall see later, this morphism may be interpreted as assigning arities to operations).
We shall frequently use this equivalence.

Transporting the {(strict)} monoidal structure on $[\cat{C},\cat{C}]_\cart/T$ along the equivalence \eqref{eqn:ev_1}, we obtain the following monoidal structure on $\cat{C}/T1$.
\begin{itemize}
	\item The unit is $I=(\eta_1\colon 1\longrightarrow T1)$.
	\item For any $(\ar{P}\colon P\longrightarrow T1)$
and $(\ar{Q}\colon Q\longrightarrow T1)$, 
their monoidal product is given by the top horizontal composite in the diagram below, i.e.,
$(P,\ar{P})\otimes(Q,\ar{Q})=(\mu_1\circ T(\ar{Q})\circ \partial_2\colon 
(P,\ar{P})\ast Q\longrightarrow T1)$:
\begin{equation}
\label{eqn:tensor_collection}
\begin{tikzpicture}[baseline=-\the\dimexpr\fontdimen22\textfont2\relax ]
      \node(00) at (-0.5,1) {$(P,\ar{P})\ast Q$};
      \node(01) at (2,1) {$TQ$};
      \node(10) at (-0.5,-1) {$P$};
      \node(11) at (2,-1) {$T1$.};
      \node(02) at (4,1) {$T^21$};
      \node(03) at (6,1) {$T1$};
      
      \draw [->] (00) to node[auto, labelsize] {$\partial_2$} (01); 
      \draw [->] (01) to node[auto, labelsize] {$T!$} (11); 
      \draw [->] (00) to node[auto,swap,labelsize] {$\partial_1$} (10); 
      \draw [->] (10) to node[auto,swap,labelsize] {$\ar{P}$} (11); 
      \draw [->] (01) to node[auto,labelsize] {$T(\ar{Q})$} (02); 
      \draw [->] (02) to node[auto,labelsize] {$\mu_1$} (03); 
      \draw (-0.2,0.2) to (0.3,0.2) to (0.3,0.7);
\end{tikzpicture}
\end{equation}
\end{itemize}
A monoid in $(\cat{C}/T1,I,\otimes)$ is called a \defemph{$T$-operad}.
{Notice that $(\cat{C}/T1,I,\otimes)$ is not a strict monoidal category (in general), since the equivalence \eqref{eqn:ev_1} is not an isomorphism (in general).}

\begin{example}
\label{ex:non-sym-operad}
The free monoid monad $(-)^\ast$ on $\Set$ is cartesian.
A $(-)^\ast$-operad is equivalent to a \emph{non-symmetric operad} \cite{May_loop}.
In this case, $T1\cong\N$, so an object of $\cat{C}/T1$ consists of a set $P$ whose elements are assigned natural number arities.  Thus we can view an element of $P$ of arity $k$ as an operation with $k$ inputs.
An element of $(P,\ar{P})\ast Q$ as in \eqref{eqn:tensor_collection} consists of an operation $p$ in $P$ together with a list of operations in $Q$ of length $\ar{P}(p)$.  The composite along the top of the diagram in \eqref{eqn:tensor_collection} adds together the arities of these operations in $Q$.
For each $(-)^\ast$-operad $((O,\ar{O}),e,m)$, the set $O$ can be understood as the set of all (derived) operations of the algebraic theory expressed by this operad, and $\ar{O}\colon O\longrightarrow 1^\ast\cong\N$ as mapping each operation to its arity.
An element of $(O,\ar{O})\ast O$ then consists of a composable arrangement of operations; that is, an operation of, say, arity $k$, together with $k$ operations for it to be composed with, one for each input.  The multiplication map $m$ then composes this arrangement to give a single operation in $O$.
\end{example}

\medskip

The monoidal category $[\cat{C},\cat{C}]_\cart$ admits a canonical (strict left) action $[\cat{C},\cat{C}]_\cart \times \cat{C}\longrightarrow \cat{C}$ on $\cat{C}$, given by evaluation.
In other words, this action is the transpose of the inclusion $[\cat{C},\cat{C}]_\cart\longrightarrow [\cat{C},\cat{C}]$.
We may precompose the strict monoidal forgetful functor $[\cat{C},\cat{C}]_\cart/T\longrightarrow [\cat{C},\cat{C}]_\cart$ with the above to obtain a (strict left) action
$[\cat{C},\cat{C}]_\cart/T\times\cat{C}\longrightarrow \cat{C}$.
Transporting this action along the monoidal equivalence \eqref{eqn:ev_1}, we obtain an action 
\[
\ast\colon \cat{C}/T1\times\cat{C}\longrightarrow \cat{C}
\]
(sometimes written as $\ast_T$). Concretely, the functor $\ast$ is defined by the pullback given in \eqref{eqn:tensor_collection};
note that the pullback $(P,\ar{P})\ast Q$ is independent of
the morphism $\ar{Q}$.
This is a \emph{pseudo} action, in the sense that
it is equipped with the canonical coherent isomorphisms
\[
I\ast C\cong C\qquad ((P,\ar{P})\otimes(Q,\ar{Q}))\ast C\cong 
(P,\ar{P})\ast((Q,\ar{Q})\ast C)
\]
natural in $(P,\ar{P}),(Q,\ar{Q})\in\cat{C}/T1$ and $C\in\cat{C}$. {The pseudo action $\ast$ is not strict (in general) because the monoidal equivalence \eqref{eqn:ev_1} is not a monoidal isomorphism (in general).}

Let $O=((O,\ar{O}),e,m)$ be a $T$-operad. An \defemph{$O$-algebra} is an object $A$ of $\cat{C}$ together with an action of $O$, i.e., a morphism $\alpha\colon (O,\ar{O})\ast A\longrightarrow A$ satisfying the usual axioms.
Note that via the monoidal equivalence \eqref{eqn:ev_1}, the $T$-operad $O$ corresponds to the cartesian monad $\overline{O}=O\ast(-)$ on $\cat{C}$ equipped with the cartesian monad morphism $\overline{\ar{O}}\colon O\ast(-)\longrightarrow T$.
An $O$-algebra is equivalent to an Eilenberg--Moore algebra for the monad $O\ast(-)$.
We say that the monad $O\ast(-)$ is \defemph{induced by} $O$.

\subsection{\texorpdfstring{$\omega$}{omega}-graphs and the free strict \texorpdfstring{$\omega$}{omega}-category monad \texorpdfstring{$T^{(\omega)}$}{T^(omega)}}

In order to define weak $\omega$-categories, we shall apply the above general theory of operads to the category $\enGph{\omega}$ of $\omega$-graphs and the monad $T^{(\omega)}$ for \emph{strict} $\omega$-category. 
These are obtained as limits of $\enGph{n}$ and $T^{(n)}$, which we now define.

\begin{definition}[\cite{Wolff_V-graph}]
\label{def:V-graph}
	For any locally small category $\cat{V}$, we define the category $\enGph{\cat{V}}$ as follows.
	\begin{itemize}
	\item An object is a (small) \defemph{$\cat{V}$-graph} $G=(\ob{G},(G(x,y))_{x,y\in\ob{G}})$, consisting of a small set $\ob{G}$ of \defemph{objects} and for each pair $x,y$ of objects, an object $G(x,y)$ of $\cat{V}$.
	\item A morphism $f$ from $G=(\ob{G},(G(x,y))_{x,y\in\ob{G}})$ to $G'=(\ob{G'},(G'(x,y))_{x,y\in\ob{G'}})$ consists of a function $\ob{f}\colon \ob{G}\longrightarrow\ob{G'}$ (whose action we denote by $f$) and, for each pair $x,y$ of objects of $G$, a morphism $f_{x,y}\colon G(x,y)\longrightarrow G'(fx,fy)$ in $\cat{V}$.\qedhere
	\end{itemize}
\end{definition}

The $\enGph{(-)}$ construction routinely extends to an endo-2-functor on the 2-category $\CAT$ of locally small categories.

\begin{definition}
\label{def:nGph}
	For each $n\in\N$, we define the category $\enGph{n}$ recursively as follows:
	\[
	\enGph{0}=\Set,\qquad \enGph{(n+1)}=\enGph{(\enGph{n})}.
	\]
	An object of $\enGph{n}$ is called an \defemph{$n$-graph}.
\end{definition}

\begin{remark}
\label{rmk:n-Gph_as_presheaf_cat}
	The $\enGph{(-)}$ construction preserves presheaf categories, i.e., for any small category $\cat{D}$, the category $\enGph{[\cat{D},\Set]}$ is equivalent to $[\cat{D}^+,\Set]$, where the category $\cat{D}^+$ is obtained from $\cat{D}$ by newly adding an object $\ast$ and, for each object $D\in\cat{D}$, two morphisms $s^{(D)},t^{(D)}\colon D\longrightarrow \ast$, such that for each morphism $f\colon D\longrightarrow D'$ in $\cat{D}$, $s^{(D')}\circ f=s^{(D)}$ and $ t^{(D')}\circ f=t^{(D)}$.
	Hence we see by induction that $\enGph{n}$ is equivalent to $[\G_n^\op,\Set]$, where $\G_n$ is the category freely generated by the graph 
	\[
\begin{tikzpicture}[baseline=-\the\dimexpr\fontdimen22\textfont2\relax ]
      \node(0) at (0,0) {$[0]$};
      \node(1) at (2,0) {$[1]$};
      \node(d) at (4,0) {$\cdots$};
      \node(n) at (6,0) {$[n]$};
      
      \draw [->,transform canvas={yshift=3pt}] (0) to node[auto, labelsize] 
      {$s_0$} (1); 
      \draw [->,transform canvas={yshift=-3pt}] (0) to node[auto, 
      swap,labelsize] 
      {$t_0$} (1); 
      \draw [->,transform canvas={yshift=3pt}] (1) to node[auto, labelsize] 
      {$s_1$} (d); 
      \draw [->,transform canvas={yshift=-3pt}] (1) to node[auto, 
      swap,labelsize] 
      {$t_1$} (d); 
      \draw [->,transform canvas={yshift=3pt}] (d) to node[auto, labelsize] 
      {$s_{n-1}$} (n); 
      \draw [->,transform canvas={yshift=-3pt}] (d) to node[auto, 
      swap,labelsize] 
      {$t_{n-1}$} (n); 
\end{tikzpicture}
\]
subject to the relations
\[
s_{k+1}\circ s_{k}=t_{k+1}\circ s_{k},\qquad s_{k+1}\circ t_{k} =t_{k+1}\circ t_{k} \qquad (k\in \{0,\dots,n-2\}).\qedhere
\]
\end{remark}

For any locally small category $\cat{V}$ with finite products, we have the category $\enCat{\cat{V}}$ of small $\cat{V}$-categories~\cite{Kelly_book}; throughout this paper, we only consider enrichment over cartesian $\cat{V}$.
The $\enCat{(-)}$ construction extends to an endofunctor on the category of locally small categories with finite products and finite product preserving functors.
($\enCat{(-)}$ is also a 2-functor, but we shall not use this fact.)
\begin{definition}
\label{def:nCat}
	For each $n\in\N$, we define the category $\enCat{n}$ recursively as follows:
	\[
	\enCat{0}=\Set,\qquad \enCat{(n+1)}=\enCat{(\enCat{n})}.
	\]
	An object of $\enCat{n}$ is called a \defemph{strict $n$-category}.
\end{definition}

Let $\cat{V}$ be a locally small category with finite products.
There is an evident forgetful functor $U^{(\cat{V})}\colon \enCat{\cat{V}}\longrightarrow\enGph{\cat{V}}$.
If $\cat{V}$ has small coproducts distributing over finite products, then $U^{(\cat{V})}$ admits a left adjoint $F^{(\cat{V})}$ \cite{Wolff_V-graph}. 
For each $n\in\N$, $\enCat{n}$ satisfies this condition (in fact, a stronger condition of \emph{extensivity} \cite{CFP_18}), so we have $F^{(\enCat{n})}\dashv U^{(\enCat{n})}$.

\begin{definition}\label{def:adj_Fn_Un}
For each $n\in\N$, we define the (monadic) adjunction 
$F^{(n)}\dashv U^{(n)}\colon \enCat{n}\longrightarrow\enGph{n}$ recursively as follows:
\begin{itemize}
	\item $F^{(0)}=U^{(0)}=\id_{\Set}$,
	\item $F^{(n+1)}\dashv U^{(n+1)}$ is the composite: 
	\[
	\begin{tikzpicture}[baseline=-\the\dimexpr\fontdimen22\textfont2\relax ]
      \node(0) at (0,0) {$\enGph{(\enGph{n})}$};
      \node(1) at (4,0) {$\enGph{(\enCat{n})}$};
      \node(2) at (8,0) {$\enCat{(\enCat{n})}$};
      
      \draw [->, transform canvas={yshift=5}] (0) to node[auto, labelsize] {$\enGph{F^{(n)}}$} (1); 
      \draw [->, transform canvas={yshift=5}] (1) to node[auto, labelsize] {$F^{(\enCat{n})}$} (2); 
      \draw [<-, transform canvas={yshift=-5}] (0) to node[auto,swap,labelsize] {$\enGph{U^{(n)}}$} (1); 
      \draw [<-, transform canvas={yshift=-5}] (1) to node[auto,swap,labelsize] {$U^{(\enCat{n})}$} (2); 
      
      \node [rotate=90] at (2,0) {$\vdash$}; 
      \node [rotate=90] at (6,0) {$\vdash$}; 
\end{tikzpicture}
	\]
	(note that $\enGph{(-)}$, being a 2-functor, preserves adjunctions).
\end{itemize}
We denote by $T^{(n)}$ the monad on $\enGph{n}$ induced by the adjunction $F^{(n)}\dashv U^{(n)}$. This monad is called the \defemph{free strict $n$-category monad}.
\end{definition}

\begin{proposition}[{\cite[Theorem F.2.1]{Leinster_book}, \cite[Theorem 4.6]{CFP_18}}]
For each $n\in\N$, the monad $T^{(n)}$ is cartesian.
\end{proposition}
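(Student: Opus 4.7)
I would prove the claim by induction on $n$. The base case $n=0$ is immediate, since $T^{(0)}=\id_{\Set}$ preserves all pullbacks and has identity unit and multiplication, whose naturality squares are trivially pullbacks.

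For the inductive step, assume $T^{(n)}$ is cartesian. By Definition~\ref{def:adj_Fn_Un} we may factor
\[
T^{(n+1)} \;=\; \enGph{U^{(n)}}\,\circ\, S^{(n)}\,\circ\, \enGph{F^{(n)}},
\]
where $S^{(n)}=U^{(\enCat{n})}\circ F^{(\enCat{n})}$ is the free $(\enCat{n})$-category monad on $\enGph{(\enCat{n})}$. My plan is to reduce the inductive step to two standalone facts. First, the 2-functor $\enGph{(-)}$ sends cartesian monads to cartesian monads: if $T$ is a cartesian monad on a category $\cat{V}$ with pullbacks, then the endofunctor induced on $\enGph{\cat{V}}$ together with its induced unit and multiplication is a cartesian monad. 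This holds because pullbacks in $\enGph{\cat{V}}$ are formed by pulling back object-sets on the nose and then pulling back hom-objects fibrewise, and the induced functor acts on homs via $T$. In particular, the inductive hypothesis yields that the composite $\enGph{U^{(n)}}\circ\enGph{F^{(n)}}$ is a cartesian monad on $\enGph{(n+1)}$. Second, for any extensive category $\cat{V}$ with finite products and pullbacks, the free $\cat{V}$-category monad on $\enGph{\cat{V}}$ is cartesian; the key observation is that its value on a $\cat{V}$-graph $G$ has hom-objects given by the polynomial formula $\coprod_{k\geq 0}\coprod_{x_0,\ldots,x_k}G(x_0,x_1)\times\cdots\times G(x_{k-1},x_k)$, and extensivity converts this into a pullback-preserving expression. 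Since each $\enCat{n}$ is extensive by \cite{CFP_18}, this applies with $\cat{V}=\enCat{n}$ to yield cartesianness of $S^{(n)}$.

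The final task is to combine these two cartesian pieces into cartesianness of the composite monad $T^{(n+1)}$. For pullback preservation of the composite functor it suffices to check that each of $\enGph{U^{(n)}}$, $S^{(n)}$, and $\enGph{F^{(n)}}$ preserves pullbacks; the only non-trivial factor is $\enGph{F^{(n)}}$, and it preserves pullbacks because the monadic functor $\enGph{U^{(n)}}$ reflects pullbacks while $\enGph{U^{(n)}}\circ\enGph{F^{(n)}}$ preserves them. The cartesianness of the unit and multiplication of $T^{(n+1)}$ must then be unpacked from those of the two factor adjunctions, using cartesianness of the corresponding natural transformations together with pullback preservation by the intervening functors. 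I expect the main obstacle to be precisely this last step: the two factor monads live on different categories and so cannot be combined via an off-the-shelf distributive law; one must instead argue cartesianness of each naturality square of $\eta^{(n+1)}$ and $\mu^{(n+1)}$ directly, by manipulating pasting diagrams involving $\enGph{F^{(n)}}$, $\enGph{U^{(n)}}$, and the structure maps of $S^{(n)}$. An alternative finish would be to write out the explicit polynomial description of $T^{(n+1)}(G)$ as a coproduct of products of $T^{(n)}$-values and verify cartesianness pointwise, using the inductive hypothesis and extensivity directly.
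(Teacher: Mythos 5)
Your overall architecture is sound, and it essentially reconstructs the strategy of the sources to which the paper defers for this statement (the paper gives no proof of its own, citing \cite[Theorem F.2.1]{Leinster_book} and \cite[Theorem 4.6]{CFP_18}): induction on $n$, preservation of cartesianness by $\enGph{(-)}$ (Proposition~\ref{prop:Gph_preserves_cartesianness} in the paper), and cartesianness of the free $\cat{V}$-category monad for extensive $\cat{V}$. Your treatment of pullback-preservation of the composite \emph{functor} is correct, including the nice reflection argument for $\enGph{F^{(n)}}$: the composite $\enGph{U^{(n)}}\circ\enGph{F^{(n)}}=\enGph{T^{(n)}}$ preserves pullbacks by the inductive hypothesis, and $\enGph{U^{(n)}}$, being monadic (cf.\ Proposition~\ref{prop:Gph_preserves_EM}), creates and hence reflects them.

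There is, however, a genuine gap exactly where you locate the ``main obstacle'': cartesianness of $\mu^{(n+1)}$ is not a routine pasting exercise and is left entirely undone. The unit is unproblematic: $\eta^{(n+1)}$ factors as $\enGph{\eta^{(n)}}$ followed by the whiskering $\enGph{U^{(n)}}\,\eta^{(\enCat{n})}\,\enGph{F^{(n)}}$, and both pieces are cartesian (the latter using \cite[Proposition 3.5]{CFP_18}, quoted in the paper, and the fact that the right adjoint $\enGph{U^{(n)}}$ preserves pullbacks). But any factorisation of $\mu^{(n+1)}$ through the two adjunctions involves a middle whiskering of the counit $\enGph{\varepsilon^{(n)}}$, where $\varepsilon^{(n)}\colon F^{(n)}U^{(n)}\Rightarrow\Id$, and counits of free--forgetful adjunctions are \emph{not} cartesian in general: already for free monoids, the naturality square of $\varepsilon$ at the unit map $1\longrightarrow N$ would force every word with product the identity of $N$ to be a word in the single letter $e$, which fails whenever $N$ has a nontrivial invertible element. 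The standard rescue---naturality squares of $\varepsilon$ at morphisms between \emph{free} algebras are pullbacks, since the forgetful functor reflects pullbacks and carries them to naturality squares of $\mu^{(n)}$---does not apply here, because the counit gets evaluated at objects of the form $U^{(\enCat{n})}F^{(\enCat{n})}\enGph{F^{(n)}}(G)$, whose homs are coproducts of \emph{products} of free strict $n$-categories, and products of free objects are not free. Closing this gap requires precisely the quantitative input of the cited proofs: the interaction of $T^{(n)}$ with finite products and universal, disjoint coproducts (extensivity in \cite{CFP_18}; familial representability in Leinster's Appendix~F). Your one-sentence ``alternative finish'' via the explicit formula $T^{(n+1)}G(x,y)\cong\coprod_{k\geq 0}\coprod_{x=x_0,\dots,x_k=y}T^{(n)}G(x_0,x_1)\times\cdots\times T^{(n)}G(x_{k-1},x_k)$ (note that the endpoints must be fixed, unlike in your displayed formula) is in fact the route that works and is essentially what the references carry out; but since you neither derive this formula nor verify the unit and multiplication squares pointwise with it, the inductive step as written remains unproved.
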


\medskip

Let us now define $\enGph{\omega}$ and $T^{(\omega)}$ as limits of $\enGph{n}$ and $T^{(n)}$ respectively.
First, the category $\enGph{\omega}$ is the limit in $\CAT$ of the diagram
\begin{equation}
\label{eqn:limit_diagram_for_omega_graph}
\begin{tikzpicture}[baseline=-\the\dimexpr\fontdimen22\textfont2\relax ]
      \node(00) at (0,0) {$\enGph{0}$};
      \node(01) at (3,0) {$\enGph{1}$};
      \node(02) at (6,0) {$\enGph{2}$};
      \node(03) at (10,0) {$\cdots$.};
      
      \draw [<-] (00) to node[auto,labelsize] {$\ob{-}$} (01);  
      \draw [<-] (01) to node[auto,labelsize] {$\enGph{(\ob{-})}$} (02); 
      \draw [<-] (02) to node[auto,labelsize] {$\enGph{(\enGph{(\ob{-})})}$} (03); 
\end{tikzpicture}
\end{equation}
Objects of $\enGph{\omega}$ are called \defemph{$\omega$-graphs}.
\begin{remark}
\label{rmk:omega-Gph_as_presheaf_cat}
	By Remark~\ref{rmk:n-Gph_as_presheaf_cat}, the diagram \eqref{eqn:limit_diagram_for_omega_graph} is equivalent (in $\CAT^{(\omega^\op)}$) to
	\begin{equation}
	\label{eqn:limit_diagram_presheaf_cats}
\begin{tikzpicture}[baseline=-\the\dimexpr\fontdimen22\textfont2\relax ]
      \node(00) at (0,0) {$[\G_0^\op,\Set]$};
      \node(01) at (3,0) {$[\G_1^\op,\Set]$};
      \node(02) at (6,0) {$[\G_2^\op,\Set]$};
      \node(03) at (9,0) {$\cdots$,};
      
      \draw [<-] (00) to node[auto,labelsize] {$[J_0^\op,\Set]$} (01);  
      \draw [<-] (01) to node[auto,labelsize] {$[J_1^\op,\Set]$} (02); 
      \draw [<-] (02) to node[auto,labelsize] {$[J_2^\op,\Set]$} (03); 
\end{tikzpicture}
\end{equation}
where $J_n\colon \G_n\longrightarrow \G_{n+1}$ is the inclusion functor (mapping $[k]\in \G_n$ to $[k]\in \G_{n+1}$). 
Hence $\enGph{\omega}$ is equivalent to the limit of \eqref{eqn:limit_diagram_presheaf_cats}, which is again a presheaf category since $\lim_n [\G_n^\op,\Set]\cong [\colim_n \G_n^\op,\Set]$.
The category $\G=\colim_n \G_n$ is freely generated by the graph 
	\[
\begin{tikzpicture}[baseline=-\the\dimexpr\fontdimen22\textfont2\relax ]
      \node(0) at (0,0) {$[0]$};
      \node(1) at (2,0) {$[1]$};
      \node(d) at (4,0) {$\cdots$};
      \node(n) at (6,0) {$[n]$};
      \node(d2) at (8,0) {$\cdots$};
      
      \draw [->,transform canvas={yshift=3pt}] (0) to node[auto, labelsize] 
      {$s_0$} (1); 
      \draw [->,transform canvas={yshift=-3pt}] (0) to node[auto, 
      swap,labelsize] 
      {$t_0$} (1); 
      \draw [->,transform canvas={yshift=3pt}] (1) to node[auto, labelsize] 
      {$s_1$} (d); 
      \draw [->,transform canvas={yshift=-3pt}] (1) to node[auto, 
      swap,labelsize] 
      {$t_1$} (d); 
      \draw [->,transform canvas={yshift=3pt}] (d) to node[auto, labelsize] 
      {$s_{n-1}$} (n); 
      \draw [->,transform canvas={yshift=-3pt}] (d) to node[auto, 
      swap,labelsize] 
      {$t_{n-1}$} (n); 
      \draw [->,transform canvas={yshift=3pt}] (n) to node[auto, labelsize] 
      {$s_{n}$} (d2); 
      \draw [->,transform canvas={yshift=-3pt}] (n) to node[auto, 
      swap,labelsize] 
      {$t_{n}$} (d2);
\end{tikzpicture}
\]
subject to the relations
\[
s_{k+1}\circ s_{k} =t_{k+1}\circ s_{k},\qquad s_{k+1}\circ t_{k}=t_{k+1}\circ t_{k} \qquad (k\in \N).
\]
{Thus $\omega$-graphs are equivalent to presheaves over $\G$, which are sometimes called \emph{globular sets} \cite{Batanin_98,Leinster_book,Cheng_Leinster_coalg}.}
Let $G$ be an $\omega$-graph,  with the corresponding globular set $G'\colon \G^\op\longrightarrow \Set$. For each $k\in\N$, elements of the set $G'[k]$ are called \defemph{$k$-cells} of $G$. The functions $G's_k$ and $G't_k$ are written simply as $s_k$ and $t_k$, and called the ($k$-dimensional) \defemph{source} and \defemph{target} maps of $G$.
For any $(k+1)$-cell $f$ of $G$, we write $f\colon a\longrightarrow b$ to express that $s_k(f)=a$ and $t_k(f)=b$.
Henceforth we shall use the concepts of $\omega$-graph and globular set interchangeably.
\end{remark}

\begin{remark}
\label{rmk:omega_gph_as_terminal_coalg}
One can also give a concise \emph{coinductive} definition of $\omega$-graph as: an $\omega$-graph $G$ consists of a set $\ob{G}$ of objects and, for each pair $x,y$ of objects, an $\omega$-graph $G(x,y)$ \cite{vandenBerg_Garner}.
More precisely, the category $\enGph{\omega}$ is the carrier of the terminal coalgebra for $\enGph{(-)}$ (seen as an endofunctor on the category of locally small categories) \cite{Cheng_Leinster_coalg}. 
The structure map of this coalgebra is the functor $\enGph{\omega}\longrightarrow \enGph{(\enGph{\omega})}$ mapping each $\omega$-graph $G$ to the ($\enGph{\omega}$)-enriched graph consisting of the same objects and for each pair of objects $x$ and $y$ of $G$, the suitably defined hom $\omega$-graph $G(x,y)$.
By Lambek's lemma, this functor is an isomorphism of categories.
\end{remark}

Note that we have a monad in the 2-category $\CAT^{(\omega^\op)}$ on the object \eqref{eqn:limit_diagram_for_omega_graph} given by the sequence of cartesian monads $(T^{(n)})_{n\in\N}$.
Applying the 2-functor $\lim \colon \CAT^{(\omega^\op)}\longrightarrow \CAT$, we obtain a monad $T^{(\omega)}$ on $\enGph{\omega}$, called the \defemph{free strict $\omega$-category monad}. 
$T^{(\omega)}$ is also cartesian, since a commutative square in $\enGph{\omega}$ is a pullback if and only if for each $n\in\N$, it is mapped to a pullback in $\enGph{n}$ by the projection $\enGph{\omega}\longrightarrow \enGph{n}$.

As a consequence, we obtain the monoidal category $\enGph{\omega}/T^{(\omega)}1$, and monoids therein are \defemph{$T^{(\omega)}$-operads} (also called \emph{globular operads}).
{Logically, we may now proceed to the next step, but some expository comments might be helpful at this point. 
See \cite[Chapter 8]{Leinster_book} for a more detailed account.

Let us start with an explicit description of the $\omega$-graph $T^{(\omega)}1$. Using the free monoid monad (or the list monad) $(-)^\ast$ on $\Set$, $T^{(\omega)}1$ (regarded as a globular set) is given by the following diagram of sets:
\begin{equation}
	    \label{eqn:Tomega-globular-set}
\begin{tikzpicture}[baseline=-\the\dimexpr\fontdimen22\textfont2\relax ]
      \node(0) at (0,0) {$1$};
      \node(1) at (2,0) {$1^\ast$};
      \node(d) at (4,0) {$1^{\ast\ast}$};
      \node(n) at (6,0) {$1^{\ast\ast\ast}$};
      \node(d2) at (8,0) {$\cdots$};
      
      \draw [<-,transform canvas={yshift=3pt}] (0) to node[auto, labelsize] 
      {$s_0={!}$} (1); 
      \draw [<-,transform canvas={yshift=-3pt}] (0) to node[auto, 
      swap,labelsize] 
      {$t_0={!}$} (1); 
      \draw [<-,transform canvas={yshift=3pt}] (1) to node[auto, labelsize] 
      {$s_1={!^\ast}$} (d); 
      \draw [<-,transform canvas={yshift=-3pt}] (1) to node[auto, 
      swap,labelsize] 
      {$t_1={!^\ast}$} (d); 
      \draw [<-,transform canvas={yshift=3pt}] (d) to node[auto, labelsize] 
      {$s_{2}={!^{\ast\ast}}$} (n); 
      \draw [<-,transform canvas={yshift=-3pt}] (d) to node[auto, 
      swap,labelsize] 
      {$t_{2}={!^{\ast\ast}}$} (n); 
      \draw [<-,transform canvas={yshift=3pt}] (n) to node[auto, labelsize] 
      {$s_{3}={!^{\ast\ast\ast}}$} (d2); 
      \draw [<-,transform canvas={yshift=-3pt}] (n) to node[auto, 
      swap,labelsize] 
      {$t_{3}={!^{\ast\ast\ast}}$} (d2);
\end{tikzpicture}
\end{equation}
Here $1=\{\bullet\}$ is a singleton, $1^\ast$ is the set of lists of $\bullet$, $1^{\ast\ast}$ is the set of lists of lists of $\bullet$, and so on. It is important that the cells of $T^{(\omega)}1$ can be regarded as \emph{globular pasting schemes}; the following diagram shows some cells of $T^{(\omega)}1$ and the corresponding globular pasting schemes.
\begin{equation}
\label{eqn:globular_pasting_shcemes}
\begin{tikzpicture}[baseline=-\the\dimexpr\fontdimen22\textfont2\relax ]
      \node(21) at (0,0) {$\bullet$};
      \node(l1) at (0,-0.5) {$\bullet\in 1$};
\end{tikzpicture}
\qquad
\begin{tikzpicture}[baseline=-\the\dimexpr\fontdimen22\textfont2\relax ]
      \node(21) at (0,0) {$\bullet$};
      \node(l1) at (0,-0.5) {$[\,]\in 1^\ast$};
\end{tikzpicture}
\qquad
\begin{tikzpicture}[baseline=-\the\dimexpr\fontdimen22\textfont2\relax ]
      \node(31) at (-0.5,0) {$\bullet$};
      \node(32) at (0.75,0) {$\bullet$};
      \node(33) at (2,0) {$\bullet$};
      \node(l0) at (0.75,-0.5) {$[\bullet,\bullet] \in1^\ast$};
      
      \draw [->]  (31) to (32);
      \draw [->] (32) to  (33);
\end{tikzpicture}
\qquad
\begin{tikzpicture}[baseline=-\the\dimexpr\fontdimen22\textfont2\relax ]
      \node(20) at (-1.5,0) {$\bullet$};
      \node(21) at (0,0) {$\bullet$};
      \node(22) at (1.5,0) {$\bullet$};
      \node(23) at (3,0) {$\bullet$};
      \node(l0) at (0.75,-0.7) {$[[\bullet],[\,],[\bullet,\bullet]]\in 1^{\ast\ast}$};
      
      \draw [->]  (21) to (22);
      
      \draw [->,bend left=30]  (20) to node (2u) {} (21);      
      \draw [->,bend right=30] (20) to node (2b) {} (21); 
      \draw [->] (2u) to (2b);

      \draw [->,bend left=50]  (22) to node (3u) {} (23);
      \draw [->] (22) to node (3m) {} (23);
      \draw [->,bend right=50] (22) to node (3b) {} (23);
      \draw [->] (3u) to (3m);
      \draw [->] (3m) to (3b);
\end{tikzpicture}
\end{equation}

We now explain what a $T^{(\omega)}$-operad and its algebra amount to. Let $O=((O,\ar{O}),e,m)$ be a $T^{(\omega)}$-operad. As in the case of non-symmetric operads (Example~\ref{ex:non-sym-operad}), the cells of the $\omega$-graph $O$ can be regarded as operations. The morphism $\ar{O}\colon O\longrightarrow T^{(\omega)}1$ maps each operation to its arity, which is a globular pasting scheme. An $O$-algebra $(A,\alpha\colon (O,\ar{O})\ast A\longrightarrow A)$ consists of an $\omega$-graph $A$ equipped with the interpretation of each operation $\sigma$ in $O$ on it. For instance, if $\sigma$ is a $1$-cell of $O$ whose arity is $[\bullet,\bullet]$, then its interpretation on $A$ is an operation mapping each composable pair of $1$-cells in $A$ to a $1$-cell in $A$. In summary, $T^{(\omega)}$-operads form a notion of algebraic theory for $\omega$-graphs whose arities are the globular pasting schemes.}

\subsection{Contractions}
\label{subsec:contractions}
We now turn to the notion of contraction.
A contraction is a piece of structure on a morphism in $\enGph{\omega}$.
Leinster \cite{Leinster_book} introduced a set-theoretic definition of contraction, to which Garner \cite{Garner_univ} gave diagrammatic formulation. We adopt the latter.

In order to motivate the definition, we first review the classical notion of \emph{lifting property}.
Given morphisms $l\colon A\longrightarrow B$ and $r\colon C\longrightarrow D$ in a category $\cat{C}$, we say that $r$ has the \defemph{right lifting property} with respect to $l$ (or equivalently, $l$ has the \defemph{left lifting property} with respect to $r$) if, for any pair of morphisms $u\colon A\longrightarrow C$ and $v\colon B\longrightarrow D$ such that $v\circ l = r\circ u$, there exists a (not necessarily unique) $w\colon B\longrightarrow C$ making the diagram 
\begin{equation*}
\begin{tikzpicture}[baseline=-\the\dimexpr\fontdimen22\textfont2\relax ]
      \node(00) at (0,1) {$A$};
      \node(01) at (2,1) {$C$};
      \node(10) at (0,-1) {$B$};
      \node(11) at (2,-1) {$D$};
      
      \draw [->] (00) to node[auto, labelsize] {$u$} (01); 
      \draw [->] (01) to node[auto, labelsize] {$r$} (11); 
      \draw [->] (00) to node[auto,swap,labelsize] {$l$} (10); 
      \draw [->] (10) to node[auto,swap,labelsize] {$v$} (11);   
      \draw [->] (10) to node[midway,fill=white,labelsize] {$w$} (01);
\end{tikzpicture}
\end{equation*}
commute.

A contraction is an algebraic version of the right lifting property, given relative to a certain set $\cat{J}$ of morphisms. 

\begin{definition} [{\cite[Proposition 3.8]{Garner_understanding}}]
\label{def:contraction_general}
	Let $\cat{C}$ be a locally small category and $\cat{J}$ a set of morphisms in $\cat{C}$. 

	A \defemph{contraction} (with respect to $\cat{J}$) on a morphism $r\colon C\longrightarrow D$ in $\cat{C}$ is a function $\kappa$ assigning, for each element $l\colon A\longrightarrow B$ in $\cat{J}$ and each $u\colon A\longrightarrow C$ and $v\colon B\longrightarrow D$ such that $r\circ u=v\circ l$, a morphism $\kappa(l,u,v)\colon B\longrightarrow C$ such that $u=\kappa(l,u,v)\circ l$ and $v=r\circ \kappa(l,u,v)$.

Given morphisms $r\colon C\longrightarrow D$ and $r'\colon C'\longrightarrow D'$ equipped with contractions $\kappa$ and $\kappa'$ respectively, a map of morphisms $(h\colon C\longrightarrow C', k\colon D\longrightarrow D' )\colon r\longrightarrow r'$ (i.e., a commutative square)
is said to \defemph{preserve contractions} if for each $(l,u,v)$ in the domain of $\kappa$, $h\circ \kappa(l,u,v)= \kappa' (l,h\circ u,k\circ v)$.
\begin{equation*}
\begin{tikzpicture}[baseline=-\the\dimexpr\fontdimen22\textfont2\relax ]
      \node(00) at (0,2) {$A$};
      \node(01) at (2,2) {$C$};
      \node(10) at (0,0) {$B$};
      \node(11) at (2,0) {$D$};
      \node(02) at (5,2) {$C'$};
      \node(12) at (5,0) {$D'$};
      
      \draw [->] (00) to node[auto, labelsize] {$u$} (01); 
      \draw [->] (01) to node[auto, labelsize] {$h$} (02); 
      \draw [->] (01) to node[auto,near start, labelsize] {$r$} (11); 
      \draw [->] (02) to node[auto, labelsize] {$r'$} (12); 
      \draw [->] (11) to node[auto,swap,labelsize] {$k$} (12);  
      \draw [->] (00) to node[auto,swap,labelsize] {$l$} (10); 
      \draw [->] (10) to node[auto,swap,labelsize] {$v$} (11);   
      \draw [->] (10) to node[midway,fill=white,labelsize] {$\kappa(l,u,v)$} (01);
      \draw [->] (10) to node[near end,fill=white,labelsize] {$\kappa'(l,h\circ u,k\circ v)$} (02);
\end{tikzpicture}\qedhere
\end{equation*}
\end{definition}

The category of morphisms in $\cat{C}$ equipped with contractions with respect to $\cat{J}$ and contraction preserving maps is denoted by $\Contr(\cat{C},\cat{J})$.
In fact, we shall be mainly interested in certain subcategories of $\Contr(\cat{C},\cat{J})$, defined as follows.
{Denote the evident codomain functor by $\mathrm{cod}\colon \Contr(\cat{C},\cat{J})\longrightarrow\cat{C}$. Then for any object $D$ of $\cat{C}$, let $\Contr(\cat{C},\cat{J})_D$ be the fibre of $\mathrm{cod}$ over $D$;}
so an object of $\Contr(\cat{C},\cat{J})_D$ is a morphism $r$ in $\cat{C}$ with codomain $D$ equipped with a contraction, and a morphism is a contraction preserving map whose second component is $\id_D$. 
Note that there is a forgetful functor $\Contr(\cat{C},\cat{J})_D\longrightarrow \cat{C}/D$.

\begin{remark}
\label{rmk:Contr_G_fibration}
If $\cat{C}$ has pullbacks, then for each set $\cat{J}$ of morphisms in $\cat{C}$, the functor $\mathrm{cod}\colon \Contr(\cat{C},\cat{J})\longrightarrow \cat{C}$ is a (Grothendieck) fibration. Indeed, given any morphism $k\colon D\longrightarrow D'$ in $\cat{C}$ and any object $(r'\colon C'\longrightarrow D',\kappa')\in \Contr(\cat{C},\cat{J})_{D'}$, one can endow the pullback $k^\ast r'$ as in
\[
\begin{tikzpicture}[baseline=-\the\dimexpr\fontdimen22\textfont2\relax ]
      \node(01) at (2,1) {$k^\ast C'$};
      \node(11) at (2,-1) {$D$};
      \node(20) at (4,1) {$C'$};
      \node(21) at (4,-1) {$D'$};
      
      \draw [->] (01) to node[auto, swap,labelsize] {$k^\ast r'$} (11); 
      \draw [->] (01) to node[auto, labelsize] {$h$} (20);
      \draw [->] (11) to node[auto,swap, labelsize] {$k$} (21);
      \draw [->] (20) to node[auto, labelsize] {$r'$} (21);
            \draw (2.3,0.2) to (2.8,0.2) to (2.8,0.7);
\end{tikzpicture}
\]
with a contraction $k^\ast \kappa'$, induced from $\kappa'$ by the universality of pullback. 
Then the morphism 
\[
(h,k)\colon (k^\ast r',k^\ast \kappa)\longrightarrow (r',\kappa')
\] 
in $\Contr(\cat{C},\cat{J})$ is the required cartesian lifting of $k$.
\end{remark}

In order to define Leinster's notion of contraction for morphisms in $\enGph{\omega}$, we define a set $\cat{J}^{(\omega)}$ of morphisms in $\enGph{\omega}$ (called the set of \emph{generating cofibrations} in \cite{Garner_univ}).
Recall from Remark \ref{rmk:omega-Gph_as_presheaf_cat} the equivalence $\enGph{\omega}\simeq [\G^\op,\Set]$.
We denote the Yoneda embedding by $\Yoneda\colon \G\longrightarrow [\G^\op,\Set]$.
The set $\cat{J}^{(\omega)}$ is defined to be 
$\{\, m_k\colon \partial \Yoneda [k]\longrightarrow \Yoneda [k]\mid k\in\N\,\}$, where $\partial \Yoneda [k]$ is the subobject of $\Yoneda [k]$ obtained by removing the unique $k$-cell $\id_{[k]}$ of $\Yoneda [k]$, and $m_k$ is the associated inclusion.
These morphisms may be depicted as follows:
\[
\mathcal{J}^{(\omega)}=\left\{
\begin{tikzpicture}[baseline=-\the\dimexpr\fontdimen22\textfont2\relax ]
      \node(11) at (0,1) {$\bigg( \quad\bigg) $};
      \node(21) at (0,-1) {$\bigg(\bullet\bigg)$};
      
      \draw [->] (0, 0.4) to node[auto,labelsize]{$m_0$} (0,-0.4);
\end{tikzpicture}, 
\begin{tikzpicture}[baseline=-\the\dimexpr\fontdimen22\textfont2\relax ]
      \node(11) at (0,1) {$\bigg( \bullet$};
      \node(12) at (1.5,1) {$\bullet \bigg)$};
      \node(21) at (0,-1) {$\bigg( \bullet$};
      \node(22) at (1.5,-1) {$\bullet \bigg)$};
      
      \draw [->]  (21) to(22);      
      
      \draw [->] (0.75, 0.4) to  node[auto,labelsize]{$m_1$} (0.75,-0.4);
\end{tikzpicture}, 
\begin{tikzpicture}[baseline=-\the\dimexpr\fontdimen22\textfont2\relax ]
      \node(11) at (0,1) {$\bigg( \bullet$};
      \node(12) at (1.5,1) {$\bullet \bigg)$};
      \node(21) at (0,-1) {$\bigg( \bullet$};
      \node(22) at (1.5,-1) {$\bullet \bigg)$};
      
      \draw [->,bend left=30]  (11) to node (1u) {} (12);
      \draw [->,bend right=30] (11) to node (1b) {} (12);
      \draw [->,bend left=30]  (21) to node (2u) {} (22);      
      \draw [->,bend right=30] (21) to node (2b) {} (22); 
      
      \draw [->] (2u) to (2b);
      
      \draw [->] (0.75, 0.4) to  node[auto,labelsize]{$m_2$} (0.75,-0.4);
\end{tikzpicture}, 
\begin{tikzpicture}[baseline=-\the\dimexpr\fontdimen22\textfont2\relax ]
      \node(11) at (0,1) {$\bigg( \bullet$};
      \node(12) at (1.5,1) {$\bullet \bigg)$};
      \node(21) at (0,-1) {$\bigg( \bullet$};
      \node(22) at (1.5,-1) {$\bullet \bigg)$};
      
      \draw [->,bend left=30]  (11) to node (1u) {} (12);
      \draw [->,bend right=30] (11) to node (1b) {} (12);
      \draw [->,bend left=30]  (21) to node (2u) {} (22);      
      \draw [->,bend right=30] (21) to node (2b) {} (22); 
      
      \draw [->,transform canvas={xshift=-0.45em}, bend right=30] (1u) to (1b);
      \draw [->,transform canvas={xshift=0.45em}, bend left=30]  (1u) to (1b);
      \draw [->,transform canvas={xshift=-0.45em}, bend right=30] (2u) to node (3s) {} (2b);
      \draw [->,transform canvas={xshift=0.45em}, bend left=30]  (2u) to node (3t) {} (2b);
      
      \draw [->] (0.6,-1) to (0.9,-1);
      \draw [->] (0.75, 0.4) to  node[auto,labelsize]{$m_3$} (0.75,-0.4);
\end{tikzpicture}, \ \dots
\right\}.
\] 
The geometric idea is that $\Yoneda [k]$ is the $\omega$-graph representing the (directed) \emph{$k$-dimensional disc} and $\partial \Yoneda [k]$ is its {boundary}, the (directed) \emph{$(k-1)$-dimensional sphere}. 
Note that for any $\omega$-graph $G$ and $k\in\N$, a morphism $\Yoneda [k]\longrightarrow G$ corresponds to a $k$-cell of $G$ by the Yoneda lemma.
{Similarly, for $k\geq 1$, a morphism $\partial \Yoneda [k]\longrightarrow G$ corresponds to a \emph{parallel pair of $(k-1)$-cells} of $G$.
Here, two $(k-1)$-cells ($k\geq 2$) $a$ and $b$ are said to be \defemph{parallel} if $s_{k-2}a=s_{k-2}b$ and $t_{k-2}a=t_{k-2}b$ hold, and we count any two $0$-cells as parallel. (We may formally extend this correspondence to the case where $k=0$ by adopting the convention that in any $\omega$-graph there is precisely one ``parallel pair of $(-1)$-cells''.)}

By a \defemph{contraction} on a morphism $r\colon C\longrightarrow D$ in $\enGph{\omega}$ we always mean a contraction (in the sense of Definition \ref{def:contraction_general}) with respect to $\cat{J}^{(\omega)}$. 
So such a contraction $\kappa$ assigns for each $k\in\N$, each pair $c,c'$ of parallel $(k-1)$-cells of $C$ and each $k$-cell $d\colon r(c) \longrightarrow r(c')$ in $D$, a $k$-cell $\kappa(k,(c,c'), d)\colon c\longrightarrow c'$ in $C$ such that $d=r(\kappa (k, (c,c'),d))$.
For any $\omega$-graph $D$, we write the category $\Contr(\enGph{\omega},\cat{J}^{(\omega)})_D$ simply as $\Contr_D$.

\subsection{The \texorpdfstring{$T^{(\omega)}$}{T^(omega)}-operad \texorpdfstring{$L$}{L} for weak \texorpdfstring{$\omega$}{omega}-categories}
\label{subsec:Ln}
We define the category $\OC{\omega}{T^{(\omega)}}$ of \defemph{$T^{(\omega)}$-operads with contractions} as the following pullback of categories (where the arrows to $\enGph{\omega}/T^{(\omega)}1$ denote the forgetful functors):
\begin{equation}
\label{eqn:OC_as_pullback}
\begin{tikzpicture}[baseline=-\the\dimexpr\fontdimen22\textfont2\relax ]
      \node(00) at (0,1) {$\OC{\omega}{T^{(\omega)}}$};
      \node(01) at (4,1) {$\Mon{\enGph{\omega}/T^{(\omega)}1}$};
      \node(10) at (0,-1) {$\Contr_{T^{(\omega)}1}$};
      \node(11) at (4,-1) {$\enGph{\omega}/T^{(\omega)}1$.};
      
      \draw [->] (00) to node[auto, labelsize] {} (01); 
      \draw [->] (01) to node[auto, labelsize] {} (11); 
      \draw [->] (00) to node[auto,swap,labelsize] {} (10); 
      \draw [->] (10) to node[auto,swap,labelsize] {} (11); 

      \draw (0.3,0.2) to (0.8,0.2) to (0.8,0.7);
\end{tikzpicture}
\end{equation}

{Roughly speaking, a contraction on a $T^{(\omega)}$-operad generates both (unbiased) composition operations and operations which yield coherence cells; see \cite[Chapter~9]{Leinster_book} for a detailed discussion. We shall use the universal $T^{(\omega)}$-operad with a contraction for our definition of weak $\omega$-category.}

\begin{proposition}[{\cite[Proposition 9.2.2]{Leinster_book}}]
	The category $\OC{\omega}{T^{(\omega)}}$ has an initial object.
\end{proposition}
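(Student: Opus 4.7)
The plan is to construct the initial object $L = ((L,\ar{L}),e,m,\kappa)$ of $\OC{\omega}{T^{(\omega)}}$ directly, by an induction on cell dimension that simultaneously builds the $\omega$-graph $L$, the arity map $\ar{L}\colon L \longrightarrow T^{(\omega)}1$, the monoid structure $(e,m)$, and the contraction $\kappa$, taking at each dimension only those cells that are forced to exist by the axioms; initiality will then hold essentially by construction.

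At dimension $0$, the unique ``parallel pair of $(-1)$-cells'' together with the unique $0$-cell of $T^{(\omega)}1$ must have a $\kappa$-image, forcing $L_0 \cong 1$ with $\ar{L}_0 = \id$; this single $0$-cell also serves as the $0$-component of the unit $e$, and $m$ is trivial here. Assuming inductively that $L_{\leq k}$ and all relevant structure in dimensions $\leq k$ have been defined, $L_{k+1}$ is built as the set of $(k+1)$-cells freely generated, with prescribed source, target, and arity, by two families: (a) a contraction cell $\kappa((a,b),\pi)\colon a \longrightarrow b$ of arity $\pi$, for each parallel pair $(a,b)$ of $k$-cells in $L_{\leq k}$ and each $(k+1)$-cell $\pi$ of $T^{(\omega)}1$ with $s_k\pi = \ar{L}(a)$ and $t_k\pi = \ar{L}(b)$ (this family subsumes the unit cell $e_{k+1}$, taken as the contraction cell for the pair $(e_k,e_k)$ with the appropriate distinguished $\pi$); and (b) a multiplication cell $m(\sigma,\vec\tau)$ for each $(k+1)$-cell $\sigma$ already present and each composable decoration of $\ar{L}(\sigma) \in T^{(\omega)}1$ by $(k+1)$-cells of $L_{k+1}$, following the pullback shape of \eqref{eqn:tensor_collection}. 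Since (b) is self-referential, $L_{k+1}$ is formally constructed as the sequential colimit of sets alternately closing under (a) and (b); the result is then quotiented by the associativity and unit equations for $(e,m)$.

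Initiality follows by induction on dimension. Given any $(O',e',m',\kappa') \in \OC{\omega}{T^{(\omega)}}$, we define a morphism $L \longrightarrow O'$ of $\omega$-graphs over $T^{(\omega)}1$ by sending each generator of type (a) or (b) to the corresponding cell of $O'$ obtained via $\kappa'$ or $m'$ respectively; source, target and arity compatibility, as well as respect for the operad equations, all hold in $O'$, so the assignment descends to the quotient. Uniqueness is immediate since every cell of $L$ is a formal expression in these generators, and any morphism in $\OC{\omega}{T^{(\omega)}}$ must send such expressions to their prescribed values.

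The main obstacle is executing the simultaneous induction rigorously. In particular one must check that the operad equations imposed at dimension $k+1$ do not collapse cells whose distinctness is needed to provide legitimate source/target data for contraction and multiplication generators at dimension $k+2$, and that the arity map descends cleanly to the quotient. A less explicit but conceptually cleaner alternative is to observe that both legs of the pullback \eqref{eqn:OC_as_pullback} are monadic over $\enGph{\omega}/T^{(\omega)}1$ via accessible monads on a locally presentable category, whence $\OC{\omega}{T^{(\omega)}}$ is itself monadic via an accessible monad, hence cocomplete, and so in particular has an initial object---namely the algebra freely generated by the empty object of $\enGph{\omega}/T^{(\omega)}1$.
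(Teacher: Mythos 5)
Your closing alternative is, in essence, the proof the paper actually relies on: the paper offers no argument beyond the citation, and Leinster's proof of Proposition 9.2.2 (Appendix G of his book) proceeds exactly by observing that the two legs of the pullback \eqref{eqn:OC_as_pullback} are finitary monadic over the locally finitely presentable category $\enGph{\omega}/T^{(\omega)}1$, that the coproduct of the two induced finitary monads exists by Kelly's transfinite constructions, that $\OC{\omega}{T^{(\omega)}}$ is then the category of algebras for this coproduct monad, and hence that the free algebra on the initial (empty) object of $\enGph{\omega}/T^{(\omega)}1$ is the desired initial object. So your fallback paragraph is correct and coincides with the official route; the only small caution is that ``monadic hence cocomplete'' is too quick --- monadicity alone does not yield cocompleteness, and you need the accessibility you invoke (or, more simply, just the existence of the free algebra on the initial object, which is all the argument uses).

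Your primary, hand-built induction, by contrast, has concrete gaps beyond the one you flag. \emph{(i)} At dimension $0$, the claim that the contraction forces $L_0\cong 1$ is unjustified for the definition in force here: since $m_0\in\cat{J}^{(\omega)}$, the contraction supplies a chosen $0$-cell $\kappa_0$ over the unique $0$-cell of $T^{(\omega)}1$, the operad unit supplies another $0$-cell $e_0$, no axiom of $\OC{\omega}{T^{(\omega)}}$ identifies them, and $m$ then freely composes them; so the $0$-cells genuinely forced into the free object form the free monoid on $\kappa_0$, not a singleton. Declaring $\kappa_0=e_0$ is an unforced identification, and it breaks initiality: morphisms must preserve both the unit and the contraction, and there are objects (the genuinely free one among them) in which the chosen contraction $0$-cell differs from the unit, to which your collapsed object admits no map. (Your claim would be fine for Leinster's original set-theoretic contraction, which has no dimension-$0$ clause; the slip comes from the convention here about the unique parallel pair of $(-1)$-cells.) \emph{(ii)} The type (b) generators are misspecified: by \eqref{eqn:tensor_collection}, a $(k+1)$-cell of $(L,\ar{L})\ast L$ is a cell $\sigma$ together with a labelling of the entire pasting scheme $\ar{L}(\sigma)$ by cells of $L$ of \emph{all} dimensions $\leq k+1$, compatible with sources, targets and arities --- not a decoration by $(k+1)$-cells alone --- so the multiplication generators at each stage also consume the lower-dimensional data already built. \emph{(iii)} The obstacle you name is a real one, not a formality: quotienting by the unit and associativity laws can merge previously distinct parallel pairs, after which a contraction defined only on generators becomes multivalued; the interleaving must quotient \emph{before} adjoining contraction cells at each stage, and one must verify that the arity and source/target maps descend. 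This is precisely the bookkeeping that the free-algebra construction for the coproduct monad handles once and for all, which is why the cited proof --- and, in the end, you --- take that route; as written, the direct construction is a plausible programme but not yet a proof.
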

Let $L$ be the initial object in $\OC{\omega}{T^{(\omega)}}$; we denote the $T^{(\omega)}$-operad underlying $L$ also by $L$.
\begin{definition}[{\cite[Definition 9.2.3]{Leinster_book}}]
	A \defemph{weak $\omega$-category} is an $L$-algebra.
\end{definition}

We denote the Eilenberg--Moore category of the monad $L\ast(-)$ by $\WkCats{\omega}$; the morphisms in $\WkCats{\omega}$ are called the \defemph{strict $\omega$-functors}, hence the subscript `s'.

\begin{example}
A canonical source of examples of weak $\omega$-categories is provided by \emph{algebras of a contractible $T^{(\omega)}$-operad} \cite[Example 9.2.4]{Leinster_book}.
Here we say that a $T^{(\omega)}$-operad $O=((O,\ar{O}),e,m)$ is \defemph{contractible} if it admits \emph{some} contraction; or equivalently, if the morphism $\ar{O}\colon O\longrightarrow T^{(\omega)}1$ has the right lifting property with respect to each $m_k\in \cat{J}^{(\omega)}$. Given such a $T^{(\omega)}$-operad $O$ and a choice of a contraction $\kappa$ on it, we obtain the unique morphism $\phi\colon L\longrightarrow O$ in $\OC{}{T^{(\omega)}}$ by the initiality of $L$. It then induces a monad morphism $\phi\ast(-)\colon L\ast (-)\longrightarrow O\ast(-)$, hence in turn a functor $\Alg{(O\ast(-))}\longrightarrow \WkCats{\omega}$. So any $O$-algebra, together with a choice of a contraction on $O$, gives rise to a weak $\omega$-category.

For example, the terminal $T^{(\omega)}$-operad, whose arity map is just $\id\colon T^{(\omega)}1\longrightarrow T^{(\omega)}1$, admits a unique contraction, hence any algebra for it---which is just an Eilenberg--Moore algebra for the monad $T^{(\omega)}$, i.e., a \emph{strict} $\omega$-category---is canonically a weak $\omega$-category.
As a less trivial example, Leinster constructs the \emph{fundamental weak $\omega$-category} of a topological space $X$ by exhibiting an action of a contractible $T^{(\omega)}$-operad on the $\omega$-graph consisting of higher homotopies in $X$; see \cite[Example 9.2.7]{Leinster_book}.

One can also define the notion of algebra of a $T^{(\omega)}$-operad over more general categories than $\enGph{\omega}$ \cite{Batanin_98,Lumsdaine}, i.e., internally in those categories.
In \cite{vandenBerg_Garner,Lumsdaine}, such a notion is defined on suitable categories of globular objects in the classifying (or syntactic) category $\mathcal{C}l(\mathbb{T})$ (cf.~\cite[Section 6]{Pitts_cat_log}) of a Martin-L\"of type theory  $\mathbb{T}$, and it is shown (using iterated identity types) that each type in $\mathbb{T}$ admits an action of a contractible $T^{(\omega)}$-operad, hence an internal weak $\omega$-category structure.
\end{example}

\section{The forgetful functor \texorpdfstring{$\Us\colon \WkCats{\omega}\longrightarrow \enGph{(\WkCats{\omega})}$}{Us}}
\label{sec:forgetful_strict}
In this section, we define the forgetful functor $\Us$,
inducing a \emph{hom weak $\omega$-category} $\cat{A}(x,y)$ over each weak $\omega$-category $\cat{A}$ and pair of objects $x,y\in\cat{A}$.
{As we shall see, our construction of $\Us$ heavily depends on the fact that $\enGph{(-)}$ preserves a lot of structure.}
{First we observe} that both the domain and codomain of $\Us$ are monadic over $\enGph{\omega}$; that the codomain is so is a consequence of the following.

\begin{proposition}[{Cf.~\cite[Proposition F.1.1 (b)]{Leinster_book}}]
\label{prop:Gph_preserves_EM}
	The 2-functor $\enGph{(-)}\colon \CAT\longrightarrow \CAT$ preserves Eilenberg--Moore objects. 
	That is, for each monad $T$ on a locally small category $\cat{V}$, the canonical comparison functor $\enGph{(\Alg{T})}\longrightarrow \Alg{(\enGph{T})}$ is an isomorphism of categories.
\end{proposition}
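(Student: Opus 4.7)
The plan is to prove this directly by unfolding the definitions on both sides and exhibiting a two-sided inverse to the comparison functor. The key observation, from which everything will follow by bookkeeping, is that $\enGph{T}$ acts as the identity on the object-sets of $\cat{V}$-graphs. Concretely, by 2-functoriality of $\enGph{(-)}$, one has
\[
\enGph{T}(G)=(\ob{G},(TG(x,y))_{x,y\in\ob{G}}),
\]
and the unit $\eta^{\enGph{T}}_G$ and multiplication $\mu^{\enGph{T}}_G$ are the identity on objects and componentwise $\eta^T_{G(x,y)}$ and $\mu^T_{G(x,y)}$ on hom objects. (The same applies to $\enGph{F}$ for any functor $F$: it is ``identity on objects'' in the sense that it modifies only the hom objects.)

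Given this, I would first handle objects. An $\enGph{T}$-algebra structure $\alpha\colon \enGph{T}(G)\longrightarrow G$ has object-component forced to be the identity by the unit axiom, and therefore consists exactly of a family of morphisms $\alpha_{x,y}\colon TG(x,y)\longrightarrow G(x,y)$ in $\cat{V}$, indexed by pairs $x,y\in\ob{G}$. The associativity and unit axioms for $\alpha$ decompose pair-by-pair into the associativity and unit axioms for each $(G(x,y),\alpha_{x,y})$ as a $T$-algebra. This yields a bijection between $\enGph{T}$-algebra structures on $G$ and upgradings of $G$ to an object of $\enGph{(\Alg{T})}$ (i.e., a choice of $T$-algebra structure on each hom object, with the same underlying set of objects).

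For morphisms, a morphism $f\colon(G,\alpha)\longrightarrow(G',\alpha')$ of $\enGph{T}$-algebras consists of a function $\ob{f}\colon\ob{G}\longrightarrow\ob{G'}$ and hom-components $f_{x,y}\colon G(x,y)\longrightarrow G'(fx,fy)$ in $\cat{V}$, and the condition $f\circ\alpha=\alpha'\circ\enGph{T}(f)$ unpacks to the statement that each $f_{x,y}$ is a morphism of $T$-algebras. This is precisely the data of a morphism in $\enGph{(\Alg{T})}$. The assignments are clearly functorial and mutually inverse to the canonical comparison functor, and agree with it (by the uniqueness built into the universal property of $\Alg{(\enGph{T})}$), establishing the isomorphism.

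There is no genuine obstacle here beyond correctly tracking the identification $\enGph{T}(G)$ and verifying that the algebra axioms are ``fibred'' over the object set; once the identity-on-objects behaviour of $\enGph{T}$ is pinned down, everything is routine. The same argument will work uniformly to give naturality in $T$, which is implicit in the 2-functoriality claim.
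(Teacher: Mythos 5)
Your proposal is correct and follows essentially the same route as the paper's proof: both observe that $\enGph{T}$ and its unit are identity on objects, deduce from the unit axiom that an $\enGph{(T)}$-algebra structure is forced to be identity on objects, and then decompose the algebra axioms hom-by-hom into $T$-algebra structures. The paper's version is terser (it spells out only the object part and leaves the morphism comparison implicit), whereas you also unpack morphisms and the agreement with the canonical comparison functor, but there is no substantive difference in method.
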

{\begin{proof}
	The monad $\enGph{T}$ is on the category $\enGph{\cat{V}}$.
	An object of $\Alg{(\enGph{T})}$ consists of a $\cat{V}$-graph $G$ together with a $\cat{V}$-graph morphism $\gamma\colon (\enGph{T})G\longrightarrow G$ satisfying the axioms of Eilenberg--Moore algebra. 
	The unit axiom forces $\gamma$ to be the identity on objects, so such a $\gamma$ consists of, for each pair $(x,y)$ of objects of $G$, a morphism $\gamma_{x,y}\colon TG(x,y)\longrightarrow G(x,y)$ in $\cat{V}$ satisfying the Eilenberg--Moore axioms. These data amount to give an Eilenberg--Moore algebra structure on each $G(x,y)$, and hence correspond to a $(\Alg{T})$-graph.
\end{proof}}

So $\enGph{(\WkCats{\omega})}=\enGph{(\Alg{(L\ast_{T^{(\omega)}}(-))})}$ is isomorphic to $\Alg{(\enGph{(L\ast_{T^{(\omega)}}(-))})}$. The monad $\enGph{(L\ast_{T^{(\omega)}}(-))}$ is on $\enGph{(\enGph{\omega})}$, which is canonically isomorphic to $\enGph{\omega}$ by Remark \ref{rmk:omega_gph_as_terminal_coalg}.
{Explicitly, the functor part of the monad $\enGph{(L\ast_{T^{(\omega)}}(-))}$ maps an $\omega$-graph $G$ to the $\omega$-graph $\enGph{(L\ast_{T^{(\omega)}}(-))}(G)$ with the same objects and such that for each pair $(x,y)$ of objects, the hom $(\enGph{(L\ast_{T^{(\omega)}}(-))}(G))(x,y)$ is equal to $L\ast_{T^{(\omega)}}(G(x,y))$, the underlying $\omega$-graph of the free weak $\omega$-category over the hom $\omega$-graph $G(x,y)$ of $G$.}
We shall induce $\Us$ from a monad morphism, that is induced by initiality of $L$.

The monad $\enGph{(L\ast_{T^{(\omega)}}(-))}$ is also induced from an operad over a cartesian monad. To show this, we use the following fact.
\begin{proposition}
\label{prop:Gph_preserves_cartesianness}
\begin{enumerate}
	\item Let $\cat{V}$ be a locally small category with pullbacks (resp.~finite limits). Then the category $\enGph{\cat{V}}$ has pullbacks (resp.~finite limits).
	\item Let $\cat{V}$ and $\cat{W}$ be locally small categories with pullbacks and $F\colon \cat{V}\longrightarrow \cat{W}$ be a pullback preserving functor. 
	Then the functor $\enGph{F}\colon\enGph{\cat{V}}\longrightarrow \enGph{\cat{W}}$ preserves pullbacks.
	\item Let $\cat{V}$  be a locally small category, $\cat{W}$ be a locally small category with pullbacks, $F,G\colon \cat{V}\longrightarrow \cat{W}$ be functors and $\alpha\colon F\longrightarrow G$ be a cartesian natural transformation. 
   Then the natural transformation $\enGph{\alpha}\colon\enGph{F}\longrightarrow \enGph{G}$ is cartesian. 
\end{enumerate}
\end{proposition}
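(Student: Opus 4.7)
The plan is to verify all three claims by a careful componentwise analysis, exploiting the fact that a morphism of $\cat{V}$-graphs consists of a function on objects together with a family of morphisms in $\cat{V}$ on homs.

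For (i), I would construct the pullback of $f\colon G\longrightarrow H$ and $g\colon K\longrightarrow H$ in $\enGph{\cat{V}}$ componentwise. Set $\ob{P}=\ob{G}\times_{\ob{H}}\ob{K}$, the pullback in $\Set$. For each pair $(x,y),(x',y')$ of objects of $P$---for which $fx=gy$ and $fx'=gy'$ automatically---define $P((x,y),(x',y'))$ to be the pullback in $\cat{V}$ of $G(x,x')\longrightarrow H(fx,fx')\longleftarrow K(y,y')$. A routine diagram chase, together with the universal properties of the componentwise pullbacks, shows this is the pullback in $\enGph{\cat{V}}$. For finite limits, one additionally takes the terminal $\cat{V}$-graph to have a single object and the terminal object of $\cat{V}$ as its unique hom. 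Crucially, this description makes manifest the following characterisation: a commuting square in $\enGph{\cat{V}}$ is a pullback precisely when its object-component is a pullback in $\Set$ and each of its hom-components is a pullback in $\cat{V}$.

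For (ii), given a pullback-preserving $F\colon\cat{V}\longrightarrow\cat{W}$, apply $\enGph{F}$ to the pullback constructed in (i). Since $\enGph{F}$ is the identity on underlying object sets, the object-component of the image square remains the pullback $\ob{G}\times_{\ob{H}}\ob{K}$ in $\Set$. On each hom, $F$ preserves the defining pullback in $\cat{V}$ by hypothesis. The characterisation from (i) then gives the conclusion. For (iii), fix any morphism $f\colon H\longrightarrow K$ in $\enGph{\cat{V}}$ and consider the naturality square of $\enGph{\alpha}$ at $f$. Its object-component is trivial: both horizontal morphisms are identities on underlying object sets, so the square is $\ob{H}=\ob{H}$ over $\ob{K}=\ob{K}$, trivially a pullback in $\Set$. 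For each pair $(x,y)$ of objects of $H$, the corresponding hom-component in $\cat{W}$ is precisely the naturality square of $\alpha$ at $f_{x,y}\colon H(x,y)\longrightarrow K(fx,fy)$, which is a pullback by the hypothesis that $\alpha$ is cartesian. Invoking the characterisation from (i) once more yields the claim.

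There is no substantial obstacle in this proposition; its content is essentially bookkeeping, made possible by the componentwise behaviour of $\enGph{(-)}$. The only mild subtlety worth stating explicitly is the characterisation of pullbacks in $\enGph{\cat{V}}$ given at the end of (i), since both (ii) and (iii) depend on it to reduce the claim to the analogous facts in $\Set$ and $\cat{V}$.
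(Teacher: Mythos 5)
Your proof is correct and complete; the paper in fact states this proposition without any proof, treating it as routine, and your componentwise argument---in particular the key characterisation that a commuting square in $\enGph{\cat{V}}$ is a pullback if and only if its object-component is a pullback in $\Set$ and all of its hom-components are pullbacks in $\cat{V}$---is exactly the standard verification the paper leaves implicit. Nothing is missing, and parts (ii) and (iii) do reduce to that characterisation just as you say.
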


The $T^{(\omega)}$-operad $L$ corresponds to the cartesian monad morphism $\overline{\ar{L}}\colon L\ast_{T^{(\omega)}}(-)\longrightarrow T^{(\omega)}$. {By Proposition~\ref{prop:Gph_preserves_cartesianness}, $\enGph{(-)}$ preserves cartesian monads as well as cartesian monad morphisms. Hence $\enGph{T^{(\omega)}}$ is a cartesian monad and $\enGph{\overline{\ar{L}}}\colon \enGph{(L\ast_{T^{(\omega)}}(-))}\longrightarrow \enGph{T^{(\omega)}}$ is a cartesian monad morphism. Since a cartesian monad morphism to $\enGph{T^{(\omega)}}$ corresponds to a $(\enGph{T^{(\omega)}})$-operad, it follows that the monad $\enGph{(L\ast_{T^{(\omega)}}(-))}$ is induced from a $(\enGph{T^{(\omega)}})$-operad.}

{$(\enGph{T^{(\omega)}})$-operads also form a notion of algebraic theory for $\omega$-graphs, but their arities are more restricted than those of $T^{(\omega)}$-operads. Here is an explicit description of the $\omega$-graph $(\enGph{T^{(\omega)}})1$ (seen as a globular set):
	\[
\begin{tikzpicture}[baseline=-\the\dimexpr\fontdimen22\textfont2\relax ]
      \node(0) at (0,0) {$1$};
      \node(1) at (2,0) {$1$};
      \node(d) at (4,0) {$1^{\ast}$};
      \node(n) at (6,0) {$1^{\ast\ast}$};
      \node(d2) at (8,0) {$\cdots$};
      
      \draw [<-,transform canvas={yshift=3pt}] (0) to node[auto, labelsize] 
      {$s_0={\id}$} (1); 
      \draw [<-,transform canvas={yshift=-3pt}] (0) to node[auto, 
      swap,labelsize] 
      {$t_0={\id}$} (1); 
      \draw [<-,transform canvas={yshift=3pt}] (1) to node[auto, labelsize] 
      {$s_1={!}$} (d); 
      \draw [<-,transform canvas={yshift=-3pt}] (1) to node[auto, 
      swap,labelsize] 
      {$t_1={!}$} (d); 
      \draw [<-,transform canvas={yshift=3pt}] (d) to node[auto, labelsize] 
      {$s_{2}={!^{\ast}}$} (n); 
      \draw [<-,transform canvas={yshift=-3pt}] (d) to node[auto, 
      swap,labelsize] 
      {$t_{2}={!^{\ast}}$} (n); 
      \draw [<-,transform canvas={yshift=3pt}] (n) to node[auto, labelsize] 
      {$s_{3}={!^{\ast\ast}}$} (d2); 
      \draw [<-,transform canvas={yshift=-3pt}] (n) to node[auto, 
      swap,labelsize] 
      {$t_{3}={!^{\ast\ast}}$} (d2);
\end{tikzpicture}
\]
The cells of $(\enGph{T^{(\omega)}})1$ represent the globular pasting schemes \emph{not involving compositions along $0$-cells}. For instance, the $3$-cell $[[\bullet],[\,],[\bullet,\bullet]]\in 1^{\ast\ast}$ of $(\enGph{T^{(\omega)}})1$ corresponds to the following globular pasting scheme (cf.~\eqref{eqn:globular_pasting_shcemes}):
\[
\begin{tikzpicture}[baseline=-\the\dimexpr\fontdimen22\textfont2\relax ]
      \node(21) at (0,0) {$\bullet$};
      \node(22) at (2,0) {$\bullet$};
      
      \draw [->,bend left=70,distance=30]  (21) to node(1) {} (22);
      \draw [->,bend left=25]  (21) to node (2) {} (22);
      \draw [->,bend left=-25]  (21) to node (3) {} (22);
      \draw [->,bend left=-70,distance=30]  (21) to node (4) {} (22);
      
      \draw [->,transform canvas={xshift=-0.5em}, bend right=30]  (1) to  (2);      
      \draw [->,transform canvas={xshift=0.5em}, bend left=30]  (1) to  (2);   
      \draw [->] (2) to (3);
      \draw [->,transform canvas={xshift=-1em,yshift=0.2em}, bend right=30]  (3) to (4);   
      \draw [->] (3) to  (4);
      \draw [->,transform canvas={xshift=1em,yshift=0.2em}, bend left=30]  (3) to  (4);  

      \draw [->] (0.8,0.65) to (1.2,0.65);
      \draw [->] (0.6,-0.6) to (0.9,-0.6);
      \draw [->] (1.1,-0.6) to (1.4,-0.6);
\end{tikzpicture}
\]
}

We turn to a concrete description of the $(\enGph{T^{(\omega)}})$-operad {corresponding to $\enGph{\overline{\ar{L}}}$, which induces} $\enGph{(L\ast_{T^{(\omega)}}(-))}$. For any locally small category $\cat{V}$, define the functor $\nameof{-}_\cat{V}\colon \cat{V}\longrightarrow \enGph{\cat{V}}$ (also written as $\nameof{-}$) by mapping each object $X\in \cat{V}$ to the $\cat{V}$-graph $\nameof{X}$ with a single object $\ast$ such that $\nameof{X}(\ast,\ast) = X$.
{Notice that $\nameof{-}$ preserves the terminal object when $\cat{V}$ has one. We shall take advantage of this fact and denote the terminal object of $\enGph{\cat{V}}$ by $\nameof{1}$, distinguishing it from the terminal object $1$ of $\cat{V}$. The terminal object of $\enGph{\omega}\cong \enGph{(\enGph{\omega})}$ is denoted by $1$. }

\begin{proposition}
\label{prop:Gph_and_nameof}
	Let $\cat{V}$ be a locally small category with finite limits and $T$ be a cartesian monad on $\cat{V}$.
	The functor 
	$\nameof{-}_T\colon\cat{V}/T1\longrightarrow \enGph{\cat{V}}/(\enGph{T})\nameof{1}$
	mapping $(\ar{P}\colon P\longrightarrow T1)$ to 
	\[
		\begin{tikzpicture}[baseline=-\the\dimexpr\fontdimen22\textfont2\relax ]
      \node(0) at (0,0) {$\nameof{P}$};
      \node(1) at (2,0) {$\nameof{T1}$};
      \node(2) at (4,0) {$(\enGph T)\nameof{1}$};
      
      \draw [->] (0) to node[auto, labelsize] {$\nameof{\ar{P}}$} (1); 
      \draw [->] (1) to node[auto, labelsize] {$\cong$} (2);  
\end{tikzpicture}
\]
makes the following square commute up to a natural isomorphism:
\[
\begin{tikzpicture}[baseline=-\the\dimexpr\fontdimen22\textfont2\relax ]
      \node(00) at (0,0.5) {$[\cat{V},\cat{V}]_\cart/T$};
      \node(01) at (5.5,0.5) {$\cat{V}/T1$};
      \node(10) at (0,-1) {$[\enGph{\cat{V}},\enGph{\cat{V}}]_\cart/\enGph{T}$};
      \node(11) at (5.5,-1) {$\enGph{\cat{V}}/(\enGph{T})\nameof{1}$.};
      
      \draw [->] (00) to node[auto, labelsize]{$\ev_1$} node[auto,swap,labelsize] {$\simeq$} (01); 
      \draw [->] (01) to node[auto, labelsize] {$\nameof{-}_T$} (11); 
      \draw [->] (00) to node[auto,swap,labelsize] {$\enGph{(-)}$} (10); 
      \draw [->] (10) to node [auto,labelsize] {$\ev_1$}node[auto,swap,labelsize] {$\simeq$} (11); 
\end{tikzpicture}
\] 
\end{proposition}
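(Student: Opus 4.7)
The plan is to unpack both composites around the square on a generic object $(\phi\colon F\longrightarrow T) \in [\cat{V},\cat{V}]_\cart/T$ and exhibit a canonical natural isomorphism between them. Going top-right, $\ev_1$ sends $\phi$ to $(\phi_1\colon F1\longrightarrow T1)$, which $\nameof{-}_T$ then sends to the composite $\nameof{F1}\xrightarrow{\nameof{\phi_1}}\nameof{T1}\xrightarrow{\cong}(\enGph{T})\nameof{1}$. Going left-bottom, Proposition~\ref{prop:Gph_preserves_cartesianness} ensures that $\enGph{\phi}\colon \enGph{F}\longrightarrow\enGph{T}$ is a cartesian natural transformation between pullback-preserving endofunctors of $\enGph{\cat{V}}$, hence lies in $[\enGph{\cat{V}},\enGph{\cat{V}}]_\cart/\enGph{T}$; evaluating this at the terminal object $\nameof{1}$ of $\enGph{\cat{V}}$ yields $(\enGph{\phi})_{\nameof{1}}\colon (\enGph{F})\nameof{1}\longrightarrow(\enGph{T})\nameof{1}$.

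The key observation is that for any endofunctor $H\colon\cat{V}\longrightarrow\cat{V}$, the $\cat{V}$-graph $(\enGph{H})\nameof{1}$ has a single object and its unique hom is $H(\nameof{1}(\ast,\ast))=H1$, yielding a canonical natural isomorphism $\iota_H\colon(\enGph{H})\nameof{1}\cong\nameof{H1}$ (in fact an identity on the nose once $\enGph{-}$ is spelled out concretely), natural in $H$. Applied to $\phi\colon F\longrightarrow T$, naturality of $\iota$ in $H$ says precisely that $(\enGph{\phi})_{\nameof{1}}$ is identified with $\nameof{\phi_1}$ via $\iota_F$ on the source and $\iota_T$ on the target. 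Furthermore, $\iota_T$ is exactly the canonical isomorphism $\nameof{T1}\cong(\enGph{T})\nameof{1}$ used in the definition of $\nameof{-}_T$.

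Assembling these, $\iota$ furnishes the required natural isomorphism between the two composites, viewed as functors into $\enGph{\cat{V}}/(\enGph{T})\nameof{1}$. Naturality in the variable $\phi$ reduces to naturality of $\iota$ in $H$: a morphism $\alpha\colon F\Rightarrow F'$ in $[\cat{V},\cat{V}]_\cart/T$ yields, via $\iota$, a commuting identification of $\enGph{\alpha}_{\nameof{1}}$ with $\nameof{\alpha_1}$. The main delicate point is slice-level compatibility, namely that the candidate iso respects the codomain projections to $(\enGph{T})\nameof{1}$; this holds precisely because $\iota_T$ coincides with the isomorphism $\nameof{T1}\cong(\enGph{T})\nameof{1}$ chosen in the definition of $\nameof{-}_T$. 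The obstacle is therefore not any substantive calculation but the careful bookkeeping of slice data and the verification that $\iota$ functions as a $2$-cell over $(\enGph{T})\nameof{1}$ in $\CAT$.
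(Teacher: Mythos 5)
Your proof is correct and is precisely the routine verification that the paper leaves implicit (the proposition is stated there without proof): everything reduces to the observation that $(\enGph{H})\nameof{1}=\nameof{H1}$ on the nose, naturally in the endofunctor $H$, applied to $F$, $T$, and the cartesian transformations between them, with slice-compatibility coming from naturality at $\phi\colon F\longrightarrow T$. The only cosmetic point is that your $\iota_T$ is the inverse of the isomorphism $\nameof{T1}\cong(\enGph{T})\nameof{1}$ fixed in the definition of $\nameof{-}_T$, which does not affect the argument.
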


It follows that the functor $\nameof{-}_T$ acquires the structure of a strong monoidal functor, since the functor $\enGph{(-)}\colon [\cat{V},\cat{V}]_\cart/T\longrightarrow [\enGph{\cat{V}},\enGph{\cat{V}}]_\cart/\enGph{T}$ does.

{
When $\cat{V}=\enGph{\omega}$, the functor $\nameof{-}_{\enGph{\omega}}\colon \enGph{\omega}\longrightarrow \enGph{(\enGph{\omega})}\cong \enGph{\omega}$ maps an $\omega$-graph $G$ to the $\omega$-graph $\nameof{G}$ with a single $0$-cell $\ast$ and in which a $(k+1)$-cell is given by a $k$-cell of $G$ for all $k\in\mathbb{N}$. In view of the equivalence $\enGph{\omega}\simeq [\G^\op,\Set]$, we have the evident functor $S\colon \G\longrightarrow \G$ mapping $[k]$ to $[k+1]$, and $\nameof{-}_{\enGph{\omega}}$ is the right Kan extension along $S^\op$.
Observe that $(\enGph{T^{(\omega)}})1\cong \nameof{T^{(\omega)}1}_{\enGph{\omega}}$ is obtained from $T^{(\omega)}1$ by this construction.}

As a special case {of $\nameof{-}_T$}, we obtain 
\begin{equation}
\label{eqn:nameof_between_collection}
\nameof{-}_{T^{(\omega)}}\colon \enGph{\omega}/T^{(\omega)}1\longrightarrow \enGph{\omega}/(\enGph{T^{(\omega)}})1;
\end{equation}
here again we are identifying the two canonically isomorphic categories $\enGph{\omega}$ and $\enGph{(\enGph{\omega})}$. 
{Given an $\omega$-graph over $T^{(\omega)}1$, $\nameof{-}_{T^{(\omega)}}$ raises the dimensions of cells by one.}
Since $\nameof{-}_{T^{(\omega)}}$ is strong monoidal, we also obtain 
\begin{equation}
\label{eqn:nameof_between_operad}
\Mon{\nameof{-}_{T^{(\omega)}}}\colon \Mon{\enGph{\omega}/T^{(\omega)}1}\longrightarrow \Mon{\enGph{\omega}/(\enGph{T^{(\omega)}})1}.
\end{equation}

Here are some properties of $\nameof{-}$.
For any locally small category $\cat{V}$, $\nameof{-}_\cat{V}$ is fully faithful, and a $\cat{V}$-graph is in the essential image of $\nameof{-}_\cat{V}$ if and only if it has precisely one object. 
Accordingly, for any locally small category $\cat{V}$ with finite limits and a cartesian monad $T$ thereon, $\nameof{-}_T$ is also fully faithful, and an object $(\ar{G}\colon G\longrightarrow (\enGph{T})\nameof{1})\in \enGph{\cat{V}}/(\enGph{T})\nameof{1}$ is in the essential image of $\nameof{-}_T$ if and only if $G\in \enGph{\cat{V}}$ has precisely one object.

For a locally small category $\cat{V}$ with small coproducts, the functor $\nameof{-}_\cat{V}\colon \cat{V}\longrightarrow\enGph{\cat{V}}$ admits a left adjoint $\coprod_\cat{V}\colon\enGph{\cat{V}}\longrightarrow \cat{V}$, mapping $G\in\enGph{\cat{V}}$ to $\coprod_{x,y\in\ob{G}}G(x,y)\in\cat{V}$.
In particular, the functor
$\nameof{-}_\enGph{\omega}\colon \enGph{\omega}\longrightarrow \enGph{\omega}$
admits a left adjoint $\coprod_\enGph{\omega}\colon\enGph{\omega}\longrightarrow \enGph{\omega}$. 
Intuitively, given an $\omega$-graph $G$, the $\omega$-graph $\coprod_{\enGph{\omega}}G$ is obtained by {\emph{lowering} the dimensions of cells} by one: for each $k\geq 1$, a $k$-cell of $G$ is turned to a $(k-1)$-cell of $\coprod_{\enGph{\omega}}G$, and the 0-cells of $G$ are thrown away. In view of the equivalence $\enGph{\omega}\simeq [\G^\op,\Set]$, $\coprod_{\enGph{\omega}}$ can be seen as the precomposition of $S^\op$.
Recall the set $\cat{J}^{(\omega)}=\{\,m_k\mid k\in\N\,\}$ of morphisms in $\enGph{\omega}$.
We have $\coprod m_{k+1}\cong m_k$ for each $k\in \N$, whereas $\coprod m_0$ is the identity morphism on the empty (= initial) $\omega$-graph.
In the sequel, for simplicity we identify $\coprod m_{k+1}$ with $m_k$.

\begin{proposition}
\label{prop:nameof_contraction}
	For any morphism $r\colon C\longrightarrow D$ in $\enGph{\omega}$, the adjunction $\coprod\dashv \nameof{-}$ yields a canonical bijective correspondence between contractions on $r$ and contractions on $\nameof{r}$. In more detail, a contraction $\kappa$ on $r$ corresponds to a contraction $\nameof{\kappa}$ on $\nameof{r}$ if and only if whenever $\widehat{u}$ is the transpose of $u$ and $\widehat{v}$ is the transpose of $v$ in the outer commutative squares in \eqref{eqn:contraction_adjoint}, $\kappa(k,u,v)$ is the transpose of $\nameof{\kappa}(k+1,\widehat{u},\widehat{v})$.
\begin{equation}
\label{eqn:contraction_adjoint}
\begin{tikzpicture}[baseline=-\the\dimexpr\fontdimen22\textfont2\relax ]
      \node(00) at (0,1) {$\partial \Yoneda[k]$};
      \node(01) at (2.5,1) {$C$};
      \node(10) at (0,-1) {$\Yoneda[k]$};
      \node(11) at (2.5,-1) {$D$};
      
      \draw [->] (00) to node[auto, labelsize] {${u}$} (01); 
      \draw [->] (01) to node[auto, labelsize] {$r$} (11); 
      \draw [->] (00) to node[auto,swap,labelsize] {$m_k$} (10); 
      \draw [->] (10) to node[auto,swap,labelsize] {$v$} (11); 
      
      \draw [->] (10) to node[midway, fill=white,labelsize] {$\kappa(k,{u},{v})$} (01); 
\end{tikzpicture}
\qquad\qquad
\begin{tikzpicture}[baseline=-\the\dimexpr\fontdimen22\textfont2\relax ]
      \node(00) at (0,1) {$\partial \Yoneda[{k+1}]$};
      \node(01) at (2.5,1) {$\nameof{C}$};
      \node(10) at (0,-1) {$\Yoneda[{k+1}]$};
      \node(11) at (2.5,-1) {$\nameof{D}$};
      
      \draw [->] (00) to node[auto, labelsize] {$\widehat{u}$} (01); 
      \draw [->] (01) to node[auto, labelsize] {$\nameof{r}$} (11); 
      \draw [->] (00) to node[auto,swap,labelsize] {$m_{k+1}$} (10); 
      \draw [->] (10) to node[auto,swap,labelsize] {$\widehat{v}$} (11); 
      
      \draw [->] (10) to node[midway, fill=white,labelsize] {${\nameof{\kappa}(k+1,\widehat{u},\widehat{v})}$} (01); 
\end{tikzpicture}
\end{equation}
\end{proposition}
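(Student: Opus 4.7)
The plan is to apply the adjoint correspondence $\coprod\dashv\nameof{-}$ at each level of the generating family $\cat{J}^{(\omega)}=\{m_k\}_{k\in\N}$, together with the identifications $\coprod m_{k+1}\cong m_k$ (and hence $\coprod\Yoneda[k+1]\cong\Yoneda[k]$, $\coprod\partial\Yoneda[k+1]\cong\partial\Yoneda[k]$), to transport contraction data between $r$ and $\nameof{r}$. I would split a contraction $\nameof{\kappa}$ on $\nameof{r}$ into its restriction to $m_0$ and its restriction to each $m_{k+1}$ with $k\in\N$.

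For the $m_0$ part, observe that $\nameof{C}$ and $\nameof{D}$ each have a unique $0$-cell and $\nameof{r}$ is the identity on objects, so every commutative square with $l=m_0$ admits a \emph{unique} filler, namely the unique $0$-cell of $\nameof{C}$. The restriction of $\nameof{\kappa}$ to $m_0$ is therefore forced and carries no information.

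For each $k\in\N$, I would use the adjoint bijection
\[
\enGph{\omega}\bigl(X,\nameof{Y}\bigr)\;\cong\;\enGph{\omega}\bigl(\textstyle\coprod X,Y\bigr)
\]
to transpose $\widehat{u}\colon \partial\Yoneda[k+1]\to\nameof{C}$, $\widehat{v}\colon \Yoneda[k+1]\to\nameof{D}$, and any candidate filler into $\nameof{C}$; under the identifications above these become $u$, $v$, and a candidate filler into $C$ fitting into the left-hand square of \eqref{eqn:contraction_adjoint}. Naturality of the adjunction implies that the two outer squares commute simultaneously and that the filler equations $\widehat{u}=\nameof{\kappa}\circ m_{k+1}$ and $\widehat{v}=\nameof{r}\circ\nameof{\kappa}$ transpose to the required $u=\kappa\circ m_k$ and $v=r\circ\kappa$. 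Assembling the two cases yields the asserted bijective correspondence, with $\kappa(k,u,v)$ the transpose of $\nameof{\kappa}(k+1,\widehat{u},\widehat{v})$.

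The one point requiring care will be to verify that the chosen isomorphisms $\coprod m_{k+1}\cong m_k$ are compatible with the unit and counit of $\coprod\dashv\nameof{-}$, so that the stated equality holds strictly and not merely up to these isomorphisms. This should reduce to a direct computation in the presheaf model $\enGph{\omega}\simeq[\G^\op,\Set]$, in which $\coprod$ is precomposition along $S^\op$ and $\nameof{-}$ is right Kan extension along $S^\op$, so that the adjunction acts on the representables $\Yoneda[k]$ and their boundaries in the expected way.
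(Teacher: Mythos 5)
Your proof is correct and is essentially the argument the paper intends: the proposition is stated without an explicit proof, but the surrounding text sets up precisely your ingredients---the adjunction $\coprod\dashv\nameof{-}$, the identifications $\coprod m_{k+1}\cong m_k$ (with $\coprod m_0$ the identity on the empty $\omega$-graph, which is exactly why the $m_0$-data on $\nameof{r}$ is forced, as you observe)---so transposing the squares and fillers levelwise is the intended proof. Your final care point is also resolved by the paper's explicit convention to identify $\coprod m_{k+1}$ with $m_k$, validated by the same presheaf computation (precomposition along $S^\op$) that you sketch.
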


This correspondence respects contraction preserving morphisms in the evident sense. In other words, we obtain a functor 
\begin{equation}\label{eqn:nameof_between_Contr}
\nameof{-}\colon \Contr_{D}\longrightarrow \Contr_{\nameof{D}}
\end{equation}
mapping each $(r\colon C\longrightarrow D, \kappa)\in \Contr_{D}$ to $(\nameof{r}\colon\nameof{C}\longrightarrow \nameof{D},\nameof{\kappa})\in\Contr_{\nameof{D}}$, where the contractions $\kappa$ and $\nameof{\kappa}$ are related as in Proposition~\ref{prop:nameof_contraction}.
The functor \eqref{eqn:nameof_between_Contr} is fully faithful and its essential image consists of those objects $(r'\colon C'\longrightarrow\nameof{D},\kappa')\in\Contr_{\nameof{D}}$ such that $C'\in \enGph{\omega}$ has precisely one object.
{As a special case of \eqref{eqn:nameof_between_Contr}, we have 
\begin{equation}\label{eqn:nameof_between_Contr_Tomega}
\nameof{-}\colon \Contr_{T^{(\omega)}1}\longrightarrow \Contr_{(\enGph{T^{(\omega)}})1},
\end{equation}
since $\nameof{T^{(\omega)}1}\cong (\enGph{T^{(\omega)}})1$.}

Now define the category $\OC{\omega}{\enGph{T^{(\omega)}}}$ of \defemph{$(\enGph{T^{(\omega)}})$-operads with contractions} as the pullback
\[
\begin{tikzpicture}[baseline=-\the\dimexpr\fontdimen22\textfont2\relax ]
      \node(00) at (0,1) {$\OC{\omega}{\enGph{T^{(\omega)}}}$};
      \node(01) at (6,1) {$\Mon{\enGph{\omega}/(\enGph{T^{(\omega)}})1}$};
      \node(10) at (0,-1) {$\Contr_{(\enGph{T^{(\omega)}})1}$};
      \node(11) at (6,-1) {$\enGph{\omega}/(\enGph{T^{(\omega)}})1$};
      
      \draw [->] (00) to node[auto, labelsize] {} (01); 
      \draw [->] (01) to node[auto, labelsize] {} (11); 
      \draw [->] (00) to node[auto,swap,labelsize] {} (10); 
      \draw [->] (10) to node[auto,swap,labelsize] {} (11); 

      \draw (0.3,0.2) to (0.8,0.2) to (0.8,0.7);
\end{tikzpicture}
\]
(cf.~\eqref{eqn:OC_as_pullback}). The three fully faithful functors \eqref{eqn:nameof_between_collection}, \eqref{eqn:nameof_between_operad} and \eqref{eqn:nameof_between_Contr_Tomega}
induce a functor
\begin{equation*}
\nameof{-}\colon \OC{\omega}{T^{(\omega)}}\longrightarrow \OC{\omega}{\enGph{T^{(\omega)}}}.
\end{equation*}
This functor is again fully faithful, and its essential image consists of those $(\enGph{T^{(\omega)}})$-operads with contractions whose underlying $\omega$-graph has precisely one object.
Mapping $L\in\OC{}{T^{(\omega)}}$ by this, we obtain $\nameof{L}\in\OC{}{\enGph{T^{(\omega)}}}$. By Proposition~\ref{prop:Gph_and_nameof}, we have an isomorphism of monads on $\enGph{\omega}$
\begin{equation}
\label{eqn:iso}
	\enGph{(L\ast_{T^{(\omega)}}(-))}\cong \nameof{L}\ast_{\enGph{T^{(\omega)}}}(-).
\end{equation}

\medskip

Next observe that for each $n\in\N$ there exists a canonical monad morphism {$\phi^{(n)}\colon\enGph{T^{(n)}}\longrightarrow T^{(n+1)}$ defined as the composition
\[
\begin{tikzpicture}[baseline=-\the\dimexpr\fontdimen22\textfont2\relax ]
      \node(0) at (0,0.4) {};
      \node(1) at (0,-4.4) {};
      \node(b) at (1.2,0.5) {$\enGph{(\enGph{n})}=\enGph{(n+1)}$};
      \node(c) at (0,-1) {$\enGph{(\enCat{n})}$};
      \node(d) at (2,-2) {$\enCat{(\enCat{n})}$};
      \node(e) at (0,-3) {$\enGph{(\enCat{n})}$};
      \node(f) at (1.2,-4.5) {$\enGph{(\enGph{n})}=\enGph{(n+1)}$};
      
      \draw [->] (0) to node[auto, labelsize] {$\enGph{F^{(n)}}$} (c); 
      \draw [->,bend left=30] (c.east) to node[auto, labelsize] {$F^{(\enCat{n})}$} (d); 
      \draw [->,bend left=30] (d) to node[auto, labelsize] {$U^{(\enCat{n})}$} (e.east); 
      \draw [->,bend right=30] (c) to node[auto,swap, labelsize] {$\id$} (e); 
      \draw [->] (e) to node[auto,labelsize] {$\enGph{U^{(n)}}$} (1); 

      \draw [->,bend right=30] (b.west) to node[auto,swap, labelsize] {$\enGph{T^{(n)}}$} (f.west); 
      \draw [->,bend left=30] (b.east) to node[auto, labelsize] {$T^{(n+1)}$} (f.east); 

      \node at (-1.3,-2) {$=$};
      \draw [2cell] (-0.2,-2) to node[auto,labelsize] {$\eta^{(\enCat{n})}$} (1.1,-2); 
      \node at (3.5,-2) {$=$};
\end{tikzpicture}
\] 
}(cf.~Definition~\ref{def:adj_Fn_Un}), where $\eta^{(\enCat{n})}$ is the unit of the adjunction $F^{(\enCat{n})}\dashv U^{(\enCat{n})}$.
Since $\eta^{(\enCat{n})}$ is a cartesian natural transformation (\cite[Proposition 3.5]{CFP_18}) and the right adjoint functor $\enGph{U^{(n)}}$ preserves pullbacks, $\phi^{(n)}$ is a cartesian natural transformation as well. 
Taking the limit, we obtain a cartesian monad morphism $\phi^{(\omega)}\colon \enGph{T^{(\omega)}}\longrightarrow T^{(\omega)}$.

The following is a standard fact for slice categories of a monoidal category (cf. Proposition~\ref{prop:slice_of_mon_cat}). 

\begin{proposition}
\label{prop:slice_of_mon_cat_base_change}
	Let $\cat{M}$ be a monoidal category with pullbacks,  $T$ and $S$ be monoids in $\cat{M}$, and $h\colon T\longrightarrow S$ be a monoid morphism.
Then there exists a monoidal adjunction 
		\begin{equation*}
		\begin{tikzpicture}[baseline=-\the\dimexpr\fontdimen22\textfont2\relax ]
      \node(0) at (0,0) {$\cat{M}/T$};
      \node(1) at (3,0) {$\cat{M}/S$,};
      
      \draw [->, transform canvas={yshift=5}] (0) to node[auto, labelsize] {$\cat{M}/h$} (1); 
      \draw [<-, transform canvas={yshift=-5}] (0) to node[auto,swap,labelsize] {$h^\ast$} (1); 
      \node [rotate=90] at (1.5,0) {$\vdash$};
\end{tikzpicture}
		\end{equation*}
		where $\cat{M}/h$ maps $(p\colon P\longrightarrow T)$ to $h\circ p$ and $h^\ast$ maps $(q\colon Q\longrightarrow S)$ to the pullback of $q$ along $h$.
\end{proposition}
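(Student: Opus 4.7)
My plan is to split the statement into three parts: (a) construct both $\cat{M}/h$ and $h^\ast$ and check they form an adjunction in $\Catone$, (b) endow $\cat{M}/T$ and $\cat{M}/S$ with the canonical monoidal structures from Proposition~\ref{prop:slice_of_mon_cat}~(i) and show that $\cat{M}/h$ is strict monoidal with respect to these, and finally (c) upgrade the adjunction of (a) to a monoidal adjunction using (b).

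For (a), the functor $\cat{M}/h$ is defined simply by post-composition with $h$, which clearly preserves commutative triangles. The functor $h^\ast$ is pullback along $h$, well-defined because $\cat{M}$ is assumed to have pullbacks. The adjunction $\cat{M}/h\dashv h^\ast$ is a completely standard fact about slice categories: a morphism from $(p\colon P\longrightarrow T)$ to $h^\ast(q\colon Q\longrightarrow S)$ in $\cat{M}/T$ is, by the universal property of the pullback $h^\ast Q=T\times_S Q$, the same as a morphism $f\colon P\longrightarrow Q$ with $q\circ f=h\circ p$, which is precisely a morphism $\cat{M}/h(p)\longrightarrow(q)$ in $\cat{M}/S$. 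Naturality in both variables is immediate.

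For (b), by Proposition~\ref{prop:slice_of_mon_cat}~(i) the forgetful functors $U_T\colon \cat{M}/T\longrightarrow \cat{M}$ and $U_S\colon\cat{M}/S\longrightarrow \cat{M}$ are strict monoidal, and $U_S\circ (\cat{M}/h)=U_T$ on the nose. The monoidal product on $\cat{M}/T$ sends $(p,q)$ to the composite $P\otimes Q\xrightarrow{p\otimes q}T\otimes T\xrightarrow{\mu_T} T$, and analogously for $\cat{M}/S$. Applying $\cat{M}/h$ to the first yields $h\circ\mu_T\circ(p\otimes q)$, while the tensor of $\cat{M}/h(p)$ and $\cat{M}/h(q)$ in $\cat{M}/S$ is $\mu_S\circ((h\circ p)\otimes (h\circ q))=\mu_S\circ(h\otimes h)\circ(p\otimes q)$. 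These coincide because $h$ is a monoid morphism, so $h\circ\mu_T=\mu_S\circ(h\otimes h)$; a parallel (in fact simpler) argument for the unit shows $\cat{M}/h$ preserves it strictly. Hence $\cat{M}/h$ is strict monoidal.

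For (c), a general abstract fact says that an adjunction whose left adjoint carries the structure of a strong monoidal functor canonically becomes a monoidal adjunction, with the lax monoidal structure on the right adjoint obtained by mate correspondence from the inverse of the monoidal structure on the left adjoint and from the unit/counit. Specialising this to our strict monoidal $\cat{M}/h$ yields the desired monoidal adjunction. The least routine step is (b): everything else is either an instance of a standard general fact or a direct unravelling of the definitions, whereas (b) is where the hypothesis that $h$ is a monoid morphism (and not merely a morphism in $\cat{M}$) is genuinely used.
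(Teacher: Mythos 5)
Your proof is correct, and since the paper states Proposition~\ref{prop:slice_of_mon_cat_base_change} as a standard fact without giving a proof, your argument is exactly the expected one: the standard slice adjunction $\cat{M}/h\dashv h^\ast$, the verification that postcomposition along a monoid morphism is strict monoidal for the canonical monoidal structures of Proposition~\ref{prop:slice_of_mon_cat}~(i), and doctrinal adjunction (Kelly) to transfer the strong monoidal structure on the left adjoint to a lax monoidal structure on $h^\ast$ making the adjunction monoidal. You also correctly identify where the hypothesis that $h$ is a monoid morphism is used, namely in the identities $h\circ\mu_T=\mu_S\circ(h\otimes h)$ and $h\circ e_T=e_S$.
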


Since $\phi^{(\omega)}$ is a monoid morphism from $\enGph{T^{(\omega)}}$ to $T^{(\omega)}$ in the monoidal category $[\enGph{\omega},\enGph{\omega}]_\cart$, by Proposition \ref{prop:slice_of_mon_cat_base_change} we obtain a monoidal adjunction 
\[
		\begin{tikzpicture}[baseline=-\the\dimexpr\fontdimen22\textfont2\relax ]
      \node(0) at (0,0) {$[\enGph{\omega},\enGph{\omega}]_\cart/\enGph{T^{(\omega)}}$};
      \node(1) at (7,0) {$[\enGph{\omega},\enGph{\omega}]_\cart/T^{(\omega)}$.};
      
      \draw [->, transform canvas={yshift=5}] (0) to node[auto, labelsize] {$[\enGph{\omega},\enGph{\omega}]_\cart/\phi^{(\omega)}$} (1); 
      \draw [<-, transform canvas={yshift=-5}] (0) to node[auto,swap,labelsize] {$(\phi^{(\omega)})^\ast$} (1); 
	\path (0) to node (m) {} (1);
      \node [rotate=90] at (m) {$\vdash$};
\end{tikzpicture}
\]
Modulo the monoidal equivalences {\eqref{eqn:ev_1} induced by the evaluation at $1$,} this monoidal adjunction is
\begin{equation}
\label{eqn:phi_n_between_slices}
		\begin{tikzpicture}[baseline=-\the\dimexpr\fontdimen22\textfont2\relax ]
      \node(0) at (0,0) {$\enGph{\omega}/(\enGph{T^{(\omega)}})1$};
      \node(1) at (5,0) {$\enGph{\omega}/T^{(\omega)}1$,};
      
      \draw [->, transform canvas={yshift=5}] (0) to node[auto, labelsize] {$\phileft$} (1); 
      \draw [<-, transform canvas={yshift=-5}] (0) to node[auto,swap,labelsize] {$(\phi^{(\omega)}_1)^\ast$} (1); 
      \path (0) to node (m) {} (1);
      \node [rotate=90] at (m) {$\vdash$};
\end{tikzpicture}
\end{equation}
given by postcomposition and pullback of $\phi^{(\omega)}_1\colon (\enGph{T^{(\omega)}})1\longrightarrow T^{(\omega)}1$.

This induces in particular the functor
\begin{equation}
	\label{eqn:phi_n_between_operads}
	(\phi^{(\omega)}_1)^\ast\colon\Mon{\enGph{\omega}/T^{(\omega)}1}\longrightarrow\Mon{\enGph{\omega}/(\enGph{T^{(\omega)}})1}.
\end{equation}

On the other hand, by the universality of pullback we also obtain the functor 
\begin{equation}
	\label{eqn:phi_n_between_Contr}
	(\phi^{(\omega)}_1)^\ast\colon \Contr_{T^{(\omega)}1}\longrightarrow \Contr_{(\enGph{T^{(\omega)}})1};
\end{equation}
{indeed, $(\phi^{(\omega)}_1)^\ast$ is the reindexing (or change-of-base) functor between the fibres of  the fibration 
\[
\mathrm{cod}\colon \Contr(\enGph{\omega},\cat{J}^{(\omega)})\longrightarrow \enGph{\omega}
\] 
(see Remark~\ref{rmk:Contr_G_fibration}) induced by $\phi^{(\omega)}_1$.}

The functors $(\phi^{(\omega)}_1)^\ast$ in \eqref{eqn:phi_n_between_slices}, \eqref{eqn:phi_n_between_operads} and \eqref{eqn:phi_n_between_Contr} induce the functor 
\begin{equation*}
	(\phi^{(\omega)}_1)^\ast\colon\OC{n+1}{T^{(\omega)}}\longrightarrow \OC{n+1}{\enGph{T^{(\omega)}}}.
\end{equation*}

\medskip

Now, the $T^{(\omega)}$-operad  $L$ is mapped by $(\phi^{(\omega)}_1)^\ast$ to the $(\enGph{T^{(\omega)}})$-operad $(\phi^{(\omega)}_1)^\ast L$, and by construction we have the following pullback square in the category $\Mon{[\enGph{\omega},\enGph{\omega}]_\cart}$ of cartesian monads on $\enGph{\omega}$:
\begin{equation}
\label{eqn:comp1}
\begin{tikzpicture}[baseline=-\the\dimexpr\fontdimen22\textfont2\relax ]
      \node(01) at (2,1) {$((\phi^{(\omega)}_1)^\ast L)\ast_{\enGph{T^{(\omega)}}} (-)$};
      \node(11) at (2,-1) {${\enGph{T^{(\omega)}}}$};
      \node(20) at (6,1) {$L\ast_{T^{(\omega)}} (-)$};
      \node(21) at (6,-1) {${T^{(\omega)}}.$};
      
      \draw [->] (01) to node[auto, labelsize] {} (11); 
      \draw [->] (01) to node[auto, labelsize] {} (20);
      \draw [->] (11) to node[auto,swap, labelsize] {$\phi^{(\omega)}$} (21);
      \draw [->] (20) to node[auto, labelsize] {$\overline{\ar{L}}$} (21);
            \draw (2.3,0.2) to (2.8,0.2) to (2.8,0.7);
\end{tikzpicture}
\end{equation}
On the other hand, $(\phi^{(\omega)}_1)^\ast L\in\OC{}{\enGph{T^{(\omega)}}}$ is in the essential image of $\nameof{-}$, and hence by the initiality of $L$ we obtain a canonical $(\enGph{T^{(\omega)}})$-operad morphism $\nameof{L}\longrightarrow(\phi^{(\omega)}_1)^\ast L$, giving rise to a monad morphism 
\begin{equation}
\label{eqn:comp2}
		\nameof{L}\ast_{\enGph{T^{(\omega)}}}(-)\longrightarrow ((\phi_1^{(\omega)})^\ast L)\ast_{\enGph{T^{(\omega)}}}(-).
\end{equation}
Precomposing \eqref{eqn:iso} and postcomposing the top horizontal arrow in \eqref{eqn:comp1} with this, we obtain a monad morphism 
\[
\enGph{(L\ast_{T^{(\omega)}}(-))}\longrightarrow L\ast_{T^{(\omega)}}(-),
\]
thus inducing the forgetful functor $\Us$ as desired.

On the level of operads, we have the following diagram in $\enGph{\omega}$, in which the top horizontal composite captures the essence of $\Us$:
\begin{equation}
\label{eqn:outline_operad}
\begin{tikzpicture}[baseline=-\the\dimexpr\fontdimen22\textfont2\relax ]
      \node(00) at (-1,1) {$\nameof{L}$};
      \node(01) at (2,1) {$(\phi^{(\omega)}_1)^\ast L$};
      \node(11) at (2,-1) {${(\enGph{T^{(\omega)}})1}$};
      \node(20) at (5,1) {$L$};
      \node(21) at (5,-1) {${T^{(\omega)}1}$.};

      \draw [->] (00) to node[auto, labelsize] {} (01); 
      \draw [->] (01) to node[auto, swap,labelsize] {} (11); 
      \draw [->] (00) to node[auto,swap,labelsize] {$\ar{\nameof{L}}$} (11);  
      \draw [->] (01) to node[auto, labelsize] {} (20);
      \draw [->] (11) to node[auto,swap, labelsize] {$\phi^{(\omega)}_1$} (21);
      \draw [->] (20) to node[auto, labelsize] {$\ar{L}$} (21);
            \draw (2.3,0.2) to (2.8,0.2) to (2.8,0.7);
\end{tikzpicture}
\end{equation}

Note that by the functoriality of $\Us\colon \WkCats{\omega}\longrightarrow \enGph{(\WkCats{\omega})}$, we see that any strict $\omega$-functor $F\colon \cat{A}\longrightarrow\cat{B}$ induces a family of strict $\omega$-functors $(F_{x,y}\colon\cat{A}(x,y)\longrightarrow\cat{B}(Fx,Fy))_{x,y\in\ob{\cat{A}}}$ as its action on homs.

\begin{remark}
By essentially the same argument, we can also construct the finite-dimensional versions of the forgetful functor $\Us$, namely for each $n\in\N$ a functor
\[
U_{\mathrm{s}}^{(n)}\colon \WkCats{(n+1)}\longrightarrow \enGph{(\WkCats{n})};
\] 
see \cite[Section 9.3]{Leinster_book} or \cite{CFP_18} for the relevant definiton of weak $n$-category.

Also, since $\Us$ (resp.~$U_{\mathrm{s}}^{(n)}$ for each $n\in\N$) is induced from a monad morphism between finitary monads on a locally finitely presentable category $\enGph{\omega}$ (resp.~$\enGph{(n+1)}$), it is monadic. 
In particular, for finite-dimensional versions this means that one can \emph{in principle} define a weak $(n+1)$-category in the sense of Leinster by means of a set of objects, for each pair of objects, a hom weak $n$-category, and various (horizontal) composition operations, following the same (weakened enrichment) approach as the classical definitions of bicategory \cite{Benabou_bicat} and tricategory \cite{GPS}. 
However, an explicit description of the monad induced by $U_{\mathrm{s}}^{(n)}$ seems challenging.
\end{remark}

\begin{remark}
The ($\enGph{T^{(\omega)}}$)-operad with contraction $\nameof{L}$ is in fact the initial object in $\OC{}{\enGph{T^{(\omega)}}}$. This is because for any ($\enGph{T^{(\omega)}}$)-operad with contraction $O=((O,\ar{O}),e,m,\kappa)$, there is a unique morphism $\nameof{L}\longrightarrow O$ whose action on the unique object $\ast$ of $\nameof{L}$ is determined by $e$, and whose action on the hom $\nameof{L}(\ast,\ast)=L$ is determined by the initiality of $L$ in $\OC{}{T^{(\omega)}}$.
This gives an alternative view to the morphism \eqref{eqn:comp2}. 
\end{remark}

{
\begin{remark}
Just as a monoidal category can be seen as a one-object bicategory, a possible definition of \emph{monoidal weak $\omega$-category} would simply be a one-object weak $\omega$-category; cf.~\cite{Baez-Dolan-categorification} and \cite[Chapter~5]{Benjamin_thesis}. If we adopt this definition, then our construction of hom weak $\omega$-categories specialises to the expected operation of \emph{forgetting the monoidal structure}, i.e., taking the underlying weak $\omega$-category of a monoidal weak $\omega$-category.
\end{remark}
}

\section{Garner's definition of weak \texorpdfstring{$\omega$}{omega}-functor}
\label{sec:Garner_weak_functor}
The morphisms of the (Eilenberg--Moore) category $\WkCats{\omega}$ preserve the structures of weak $\omega$-categories on the nose, hence they are called strict $\omega$-functors. Garner \cite{Garner_homomorphisms} introduced the more general notion of \emph{homomorphism} between weak $\omega$-categories, which we call \emph{weak $\omega$-functor}. Weak $\omega$-functors are higher dimensional analogues of pseudofunctors between bicategories or trihomomorphisms between tricategories, i.e., functors preserving the structures up to coherent weakly invertible cells.
Our construction of hom weak $\omega$-categories is compatible not only with strict $\omega$-functors, but also with weak $\omega$-functors.
That is, any weak $\omega$-functor $F\colon\cat{A}\longrightarrow\cat{B}$ induces a family $(F_{x,y}\colon \cat{A}(x,y)\longrightarrow\cat{B}(Fx,Fy))_{x,y\in\ob{\cat{A}}}$ of weak $\omega$-functors between the hom weak $\omega$-categories; we shall show this in the next section. In this section, we review the definition of weak $\omega$-functor.

\medskip

Weak $\omega$-categories and weak $\omega$-functors form a category, which we denote by $\WkCat{\omega}$. 
The category $\WkCat{\omega}$ is defined as the (co-)Kleisli category of a certain comonad $Q$ on $\WkCats{\omega}$. So our main task is to define this comonad. 
In \cite{Garner_homomorphisms}, Garner derives $Q$ from his theory of \emph{algebraic weak factorisation systems} \cite{Garner_understanding}; although this general perspective is intriguing, it presupposes rather heavy machinery. 
Here we shall present a more direct definition.

A certain class of strict $\omega$-functors, which we call \emph{surjective equivalences}, is a key for the definition of weak $\omega$-functor. Surjective equivalences are called \emph{acyclic fibrations} by Garner \cite{Garner_homomorphisms}, and indeed, the surjective equivalences between strict $\omega$-categories are the acyclic (trivial) fibrations {with respect to the \emph{folk model structure} on the category of} strict $\omega$-categories defined in \cite{Lafont_Metayer_Worytkiewicz_folk_model_str_omega_cat} {and further studied in \cite{Ara_Lucas_monoidal_model_str}}.
In the following definitions, we denote the forgetful functor $\WkCats{\omega}\longrightarrow\enGph{\omega}$ by $\mathopen{|}-\mathclose{|}$. Also recall the set $\cat{J}^{(\omega)}$ of morphisms in $\enGph{\omega}$ and related notions introduced in Section \ref{subsec:contractions}.
\begin{definition}
    A strict $\omega$-functor $F\colon\cat{A}\longrightarrow\cat{B}$ is a \defemph{surjective equivalence} if the morphism $|F|\colon |\cat{A}|\longrightarrow |\cat{B}|$ of $\omega$-graphs has the right lifting property with respect to all morphisms in the set $\mathcal{J}^{(\omega)}$.
\end{definition}

\begin{definition}
	Let $F\colon \cat{A}\longrightarrow \cat{B}$ be a strict $\omega$-functor.
	A \defemph{contraction on $F$} is a contraction (with respect to $\mathcal{J}^{(\omega)}$) on the morphism $|F|\colon |\cat{A}|\longrightarrow|\cat{B}|$ of $\omega$-graphs.

For each weak $\omega$-category $\cat{A}$, define the category $\Contr_\cat{A}$ of all strict $\omega$-functors to $\cat{A}$ equipped with contractions as the following pullback of categories:
\begin{equation*}
\begin{tikzpicture}[baseline=-\the\dimexpr\fontdimen22\textfont2\relax ]
      \node(00) at (0,1) {$\Contr_{\cat{A}}$};
      \node(01) at (4,1) {${\WkCats{\omega}/\cat{A}}$};
      \node(10) at (0,-1) {$\Contr_{|\cat{A}|}$};
      \node(11) at (4,-1) {$\enGph{\omega}/|\cat{A}|$};
      
      \draw [->] (00) to node[auto, labelsize] {} (01); 
      \draw [->] (01) to node[auto, labelsize] {$\mathopen{|}-\mathclose{|}$} (11); 
      \draw [->] (00) to node[auto,swap,labelsize] {} (10); 
      \draw [->] (10) to node[auto,swap,labelsize] {} (11); 

      \draw (0.3,0.2) to (0.8,0.2) to (0.8,0.7);
\end{tikzpicture}
\end{equation*}
where $\Contr_{|\cat{A}|}=\Contr (\enGph{\omega},\cat{J}^{(\omega)})_{|\cat{A}|}$.
\end{definition}

Of course, for every object $(F\colon \cat{B}\longrightarrow \cat{A},\kappa)\in\Contr_{\cat{A}}$, $F$ is a surjective equivalence.

\begin{remark}
Using the left adjoint $F^{(\omega)}_\mathrm{Wk}\colon \enGph{\omega}\longrightarrow\WkCats{\omega}$ of $\mathopen{|}-\mathclose{|}$, we obtain the set 
\[
F^{(\omega)}_\mathrm{Wk}\cat{J}^{(\omega)}=\{\, F^{(\omega)}_\mathrm{Wk}m_k\colon F^{(\omega)}_\mathrm{Wk}\partial \Yoneda [k]\longrightarrow F^{(\omega)}_\mathrm{Wk}\Yoneda [k]\mid k\in\mathbb{N}\,\}
\]
of morphisms in $\WkCats{\omega}$. A contraction on a strict $\omega$-functor defined above corresponds to a contraction with respect to $F^{(\omega)}_\mathrm{Wk}\cat{J}^{(\omega)}$, as defined in Definition \ref{def:contraction_general}.
Also, for each weak $\omega$-category $\cat{A}$, the category $\Contr_{\cat{A}}$ is isomorphic to $\Contr (\WkCats{\omega},F^{(\omega)}_\mathrm{Wk}\cat{J}^{(\omega)})_\cat{A}$.
\end{remark}

\begin{proposition}
	For any weak $\omega$-category $\cat{A}$, the category $\Contr_{\cat{A}}$ has an initial object $(\varepsilon_\cat{A}\colon Q\cat{A}\longrightarrow \cat{A},\kappa_\cat{A})$.
\end{proposition}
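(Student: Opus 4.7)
The plan is to construct $(\varepsilon_\cat{A}\colon Q\cat{A}\longrightarrow\cat{A},\kappa_\cat{A})$ by a transfinite small-object argument carried out in $\WkCats{\omega}$, closely parallel to Leinster's construction of the initial $T^{(\omega)}$-operad with contraction. After all, an object of $\Contr_{\cat{A}}$ is a strict $\omega$-functor into $\cat{A}$ equipped with specified fillers for every lifting problem of an $m_k\in\cat{J}^{(\omega)}$ against its underlying $\omega$-graph morphism, so what we are building is in essence a cofibrant replacement of $\cat{A}$ with respect to the set $F^{(\omega)}_{\mathrm{Wk}}\cat{J}^{(\omega)}$ of cofibrations in $\WkCats{\omega}$, where $F^{(\omega)}_{\mathrm{Wk}}\dashv \mathopen{|}-\mathclose{|}$ is the free-forgetful adjunction.

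Concretely, I would define a chain $\varepsilon_\alpha\colon Q_\alpha\cat{A}\longrightarrow \cat{A}$ of strict $\omega$-functors indexed by ordinals. Take $Q_0\cat{A}$ to be the initial weak $\omega$-category (i.e., the free $L$-algebra on the empty $\omega$-graph), with its unique map $\varepsilon_0$ into $\cat{A}$. At each successor stage $\alpha\mapsto\alpha+1$, form in $\WkCats{\omega}$ a single pushout that freely adjoins a diagonal filler for every commutative square $\varepsilon_\alpha\circ u=v\circ F^{(\omega)}_{\mathrm{Wk}}m_k$ (with $k\in\N$) lacking one; the universal property of this pushout, together with $\varepsilon_\alpha$ and the various $v$, induces $\varepsilon_{\alpha+1}\colon Q_{\alpha+1}\cat{A}\longrightarrow\cat{A}$ extending $\varepsilon_\alpha$. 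At limit stages take directed colimits in $\WkCats{\omega}$. Since $\WkCats{\omega}$ is locally finitely presentable (being monadic over the locally finitely presentable $\enGph{\omega}$ via a finitary monad) and each $F^{(\omega)}_{\mathrm{Wk}}m_k$ is a morphism between finitely presentable objects, this chain stabilises at some small ordinal $\lambda$. I would set $Q\cat{A}=Q_\lambda\cat{A}$, $\varepsilon_\cat{A}=\varepsilon_\lambda$, and let $\kappa_\cat{A}$ pick, for each lifting problem $(m_k,u,v)$, the filler freely adjoined at the stage where it first arose.

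To verify initiality, given any $(F'\colon \cat{B}\longrightarrow \cat{A},\kappa')\in \Contr_\cat{A}$, I would construct by transfinite induction a strict $\omega$-functor $h\colon Q\cat{A}\longrightarrow \cat{B}$ over $\cat{A}$ that preserves contractions: $h_0$ is forced by the initiality of $Q_0\cat{A}$; at each successor stage the universal property of the pushout demands that the freshly adjoined filler for $(m_k,u,v)$ be sent to $\kappa'(m_k,\mathopen{|}h_\alpha\mathclose{|}\circ u,v)\in \mathopen{|}\cat{B}\mathclose{|}$, which is simultaneously exactly what contraction-preservation requires; limit stages are handled by universality of the colimit, and uniqueness at every stage is automatic from these forcings. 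The principal obstacle is to verify that the transfinite iteration genuinely terminates at a small ordinal and remains inside $\WkCats{\omega}$---equivalently, that Garner's algebraic small object argument \cite{Garner_understanding} applies to $F^{(\omega)}_{\mathrm{Wk}}\cat{J}^{(\omega)}$ on the locally finitely presentable category $\WkCats{\omega}$. This is where the finitariness of the monad $L\ast(-)$ and the finite presentability of each $\partial\Yoneda[k]$ and $\Yoneda[k]$ do the real work.
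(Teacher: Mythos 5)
Your overall instinct is the right one, and in fact the paper disposes of this proposition in a single line, citing \cite[Proposition 2.6]{Garner_homomorphisms}, whose proof is exactly an application of Garner's algebraic small object argument to the set $F^{(\omega)}_{\mathrm{Wk}}\cat{J}^{(\omega)}$ --- so your closing paragraph correctly locates where the real work happens. The problem is that the transfinite construction you actually describe is Quillen's classical small object argument, not Garner's, and as written it has two genuine defects. First, the chain does not stabilise: each successor stage adjoins fresh cells, which create fresh parallel pairs and hence fresh lifting problems, so $Q_\alpha\cat{A}\longrightarrow Q_{\alpha+1}\cat{A}$ is never an isomorphism. Finite presentability of $\partial\Yoneda[k]$ and $\Yoneda[k]$ buys you something weaker: stopping at stage $\omega$, every $u\colon \partial\Yoneda[k]\longrightarrow \mathopen{|}Q_\omega\cat{A}\mathclose{|}$ factors through a finite stage, so $\varepsilon_\omega$ admits a contraction. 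Convergence of the chain is precisely the delicate point that Garner's refinement addresses, and it is achieved there by a modified successor step involving a quotient, not by the naive pushout.

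Second, and more seriously for the statement at hand, initiality does not follow ``automatically'' from the naive construction. The values of $\kappa_\cat{A}$ are only the \emph{chosen} fillers, and contraction preservation constrains a map $h\colon Q\cat{A}\longrightarrow\cat{B}$ in $\Contr_\cat{A}$ only on those cells. With the naive pushout step one inevitably creates fillers that are not chosen --- for instance when two lifting problems at a finite stage become identified in the colimit (the transition maps are not known to be monic in $\WkCats{\omega}$ without further argument), or when a square whose $u$ factors through an earlier stage receives a second filler. On such redundant cells $h$ is unconstrained, and uniqueness can fail, since $Q\cat{A}$ typically contains distinct parallel cells lying over the same cell of $\cat{A}$, giving genuinely different extensions. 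So the naive argument produces an object of $\Contr_\cat{A}$ admitting a map to every other object, i.e.\ a \emph{weakly} initial object --- which is exactly the gap between the classical small object argument (yielding a weak factorisation system) and the algebraic one (yielding a free, hence initial, structure). Your bookkeeping clause ``lacking one'' gestures at the fix, but making it precise requires either proving monicity of the transition maps together with a careful tracking of chosen fillers, or simply invoking \cite[Proposition 2.6]{Garner_homomorphisms}, resting on the algebraic small object argument of \cite{Garner_understanding}, as the paper does.
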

\begin{proof}
This is a special case of \cite[Proposition 2.6]{Garner_homomorphisms}.
\end{proof}

See \cite[Section 5]{Garner_homomorphisms} for a more explicit description of $Q\cat{A}$ by means of computads. Intuitively, $Q\cat{A}$ is obtained from $\cat{A}$ by inductively replacing equalities between various composites of cells by weakly invertible higher-dimensional cells, so that a strict $\omega$-functor $Q\cat{A}\longrightarrow\cat{B}$ amounts to a weak $\omega$-functor $\cat{A}\longrightarrow \cat{B}$; indeed, this is how a weak $\omega$-functor is defined.
We also remark that the construction $\cat{A}\longmapsto Q\cat{A}$ is an extension to weak $\omega$-categories of the \emph{standard resolutions} of strict $\omega$-categories introduced in \cite[Section 4]{Metayer_resolutions_by_polygraphs}.
{The standard resolution $Q\cat{A}$ of a strict $\omega$-category $\cat{A}$ is a cofibrant replacement of $\cat{A}$ with respect to the folk model structure; see \cite{Lafont_Metayer_Worytkiewicz_folk_model_str_omega_cat}. }

We claim that $Q$ extends to a comonad on $\WkCats{\omega}$.
\begin{itemize}
	\item To describe the action of $Q$ on morphisms, suppose $F\colon \cat{A}\longrightarrow \cat{B}$ is a strict $\omega$-functor.
	We have a functor $F^\ast\colon \Contr_{\cat{B}}\longrightarrow \Contr_{\cat{A}}$ defined by pulling back along $F$; see Remark \ref{rmk:Contr_G_fibration}. 
	In particular, $(\varepsilon_\cat{B}\colon Q\cat{B}\longrightarrow \cat{B},\kappa_\cat{B})$ is mapped to the pullback $F^\ast \varepsilon_\cat{B}\colon F^\ast Q\cat{B}\longrightarrow\cat{A}$ of $\varepsilon_\cat{B}$ along $F$, equipped with the contraction induced from $\kappa_\cat{B}$ by the universality of pullback.
	By the initiality of $(\varepsilon_\cat{A},\kappa_\cat{A})$, we obtain a canonical strict $\omega$-functor $Q\cat{A}\longrightarrow F^\ast Q\cat{B}$ making the left triangle in \eqref{eqn:functoriality_of_Q} commute.
	We define $QF\colon Q\cat{A}\longrightarrow Q\cat{B}$ as the top horizontal composite in \eqref{eqn:functoriality_of_Q}.
	\begin{equation}
	\label{eqn:functoriality_of_Q}
	\begin{tikzpicture}[baseline=-\the\dimexpr\fontdimen22\textfont2\relax ]
	\node(0) at (-2.5,1) {$Q\cat{A}$};
      \node(00) at (-0.5,1) {$F^\ast Q\cat{B}$};
      \node(01) at (1.5,1) {$Q\cat{B}$};
      \node(10) at (-0.5,-1) {$\cat{A}$};
      \node(11) at (1.5,-1) {$\cat{B}$};
      
      \draw [->] (0) to node [auto, labelsize] {} (00);
      \draw [->] (0) to node [auto, labelsize, swap] {$\varepsilon_\cat{A}$}(10);
      \draw [->] (00) to node[auto, labelsize] {} (01); 
      \draw [->] (01) to node[auto, labelsize] {$\varepsilon_\cat{B}$} (11); 
      \draw [->] (00) to node[auto,near start,swap,labelsize] {$F^\ast \varepsilon_\cat{B}$} (10); 
      \draw [->] (10) to node[auto,swap,labelsize] {$F$} (11); 
      \draw (-0.2,0.2) to (0.3,0.2) to (0.3,0.7);
\end{tikzpicture}
	\end{equation}
	\item The counit of $Q$ at $\cat{A}$ is given by $\varepsilon_\cat{A}\colon Q\cat{A}\longrightarrow \cat{A}$.
	\item The comultiplication of $Q$ at $\cat{A}$ is given by $\delta_\cat{A}\colon QA\longrightarrow Q^2 A$  as below, induced by the initiality of $(\varepsilon_\cat{A},\kappa_\cat{A})$ (we equip the morphism $\varepsilon_\cat{A}\circ \varepsilon_{Q\cat{A}}$ with the contraction induced from $\kappa_\cat{A}$ and $\kappa_{Q\cat{A}}$; see \cite[Section~2.8]{Bourke_Garner_1}).
	\begin{equation*}
	\begin{tikzpicture}[baseline=-\the\dimexpr\fontdimen22\textfont2\relax ]
	\node(0) at (0,1) {$Q\cat{A}$};
      \node(00) at (2,1) {$Q^2\cat{A}$};;
      \node(10) at (1,-1) {$\cat{A}$};
      
      \draw [->] (0) to node [auto, labelsize] {$\delta_\cat{A}$} (00);
      \draw [->] (0) to node [auto, labelsize, swap] {$\varepsilon_\cat{A}$}(10);
      \draw [->] (00) to node[auto,labelsize] {$\varepsilon_\cat{A}\circ \varepsilon_{Q\cat{A}}$} (10); 
\end{tikzpicture}
	\end{equation*}
\end{itemize}

It is routine to check that the data $(Q,\varepsilon,\delta)$ defines a comonad on $\WkCats{\omega}$.

\begin{definition}
	A \defemph{weak $\omega$-functor} from $\cat{A}$ to $\cat{B}$ is a strict $\omega$-functor $Q\cat{A}\longrightarrow \cat{B}$. The category $\WkCat{\omega}$ of all weak $\omega$-categories and weak $\omega$-functors is defined as the Kleisli category of the comonad $Q$.
\end{definition}

In particular, this means that the identity weak $\omega$-functor on $\cat{A}$ is $\varepsilon_\cat{A}\colon Q\cat{A}\longrightarrow \cat{A}$, and that the composite of $F\colon Q\cat{A}\longrightarrow\cat{B}$ and $G\colon Q\cat{B}\longrightarrow\cat{C}$ is given by 
\[
	\begin{tikzpicture}[baseline=-\the\dimexpr\fontdimen22\textfont2\relax ]
	  \node(0) at (0,0) {$Q\cat{A}$};
     \node(1) at (2,0) {$Q^2\cat{A}$};
      \node(2) at (4,0) {$Q\cat{B}$};
      \node(3) at (6,0) {$\cat{C}.$};
      
      \draw [->] (0) to node [auto, labelsize] {$\delta_\cat{A}$} (1);
      \draw [->] (1) to node [auto, labelsize] {$QF$}(2);
      \draw [->] (2) to node[auto, labelsize] {$G$} (3); 
\end{tikzpicture}
\]

\begin{remark}
By definition, a weak $\omega$-functor $F\colon \cat{A}\longrightarrow\cat{B}$ gives rise to a span in $\WkCats{\omega}$
\begin{equation*}
\begin{tikzpicture}[baseline=-\the\dimexpr\fontdimen22\textfont2\relax ]
	 \node(0) at (0,-0.75) {$\cat{A}$};
     \node(1) at (2,0.75) {$Q\cat{A}$};
     \node(2) at (4,-0.75) {$\cat{B}$};
      
     \draw [<-] (0) to node [auto, labelsize] {$\varepsilon_\cat{A}$} (1);
     \draw [->] (1) to node [auto, labelsize] {$F$}(2);
\end{tikzpicture}
\end{equation*}
whose left leg $\varepsilon_\cat{A}$ is a surjective equivalence equipped with the universal contraction $\kappa_\cat{A}$ (see also \cite{Bourke_Garner_2}). Conversely, every span in $\WkCats{\omega}$
\begin{equation}
\label{eqn:anafunctor_span}
\begin{tikzpicture}[baseline=-\the\dimexpr\fontdimen22\textfont2\relax ]
	 \node(0) at (0,-0.75) {$\cat{A}$};
     \node(1) at (2,0.75)  {$\cat{E}$};
     \node(2) at (4,-0.75) {$\cat{B}$};
      
     \draw [<-] (0) to node [auto, labelsize] {$J$} (1);
     \draw [->] (1) to node [auto, labelsize] {$G$}(2);
\end{tikzpicture}
\end{equation}
whose left leg $J$ is a surjective equivalence, together with a choice of a contraction $\kappa$ on $J$, gives rise to a weak $\omega$-functor from $\cat{A}$ to $\cat{B}$ (because we obtain a unique morphism $H\colon (\varepsilon_\cat{A}\colon Q\cat{A}\longrightarrow \cat{A},\kappa_\cat{A})\longrightarrow (J\colon\cat{E}\longrightarrow\cat{A},\kappa)$ in $\Contr_\cat{A}$ by initiality and hence a weak $\omega$-functor $G\circ H\colon\cat{A}\longrightarrow\cat{B}$).

In ordinary category theory, surjective equivalences amount to surjective-on-objects equivalences of categories, and the spans \eqref{eqn:anafunctor_span} in $\Catone$ with $J$ a surjective equivalence (without a choice of a contraction) correspond to the \emph{anafunctors} of Makkai \cite{Makkai_anafunctor}.
\end{remark}

{
\begin{remark}
Now that we have the hom weak $\omega$-categories $\cat{A}(x,y)$ of a weak $\omega$-category $\cat{A}$, in light of the weakened enrichment approach to weak higher-dimensional categories, it seems natural to seek a way to extract the \emph{composition} weak $\omega$-functors
\[
\circ\colon \cat{A}(y,z)\times\cat{A}(x,y)\longrightarrow \cat{A}(x,z)
\]
from the weak $\omega$-category structure of $\cat{A}$.
Currently we do not know how to define such weak $\omega$-functors, and leave this important construction as future work.
\end{remark}
}

\section{The forgetful functor \texorpdfstring{$U\colon \WkCat{\omega}\longrightarrow \enGph{(\WkCat{\omega})}$}{U}}
\label{sec:forgetful_weak}
Let us denote the canonical (bijective-on-objects) right adjoint functor associated with the Kleisli category $\WkCat{\omega}$ by $J\colon \WkCats{\omega}\longrightarrow\WkCat{\omega}$. 
In this section we extend the action of the forgetful functor $\Us\colon \WkCats{\omega}\longrightarrow\enGph{(\WkCats{\omega})}$ defined in Section~\ref{sec:forgetful_strict} to weak $\omega$-functors.
That is, we construct an extension $U$ of $\Us$ making the diagram
\begin{equation}
\label{eqn:extension_U}
\begin{tikzpicture}[baseline=-\the\dimexpr\fontdimen22\textfont2\relax ]
      \node(00) at (0,1) {$\WkCats{\omega}$};
      \node(01) at (4.5,1) {$\enGph{(\WkCats{\omega})}$};
      \node(10) at (0,-1) {$\WkCat{\omega}$};
      \node(11) at (4.5,-1) {$\enGph{(\WkCat{\omega})}$};
      
      \draw [->] (00) to node[auto, labelsize] {$\Us$} (01); 
      \draw [->] (01) to node[auto, labelsize] {$\enGph{J}$} (11); 
      \draw [->] (00) to node[auto,swap,labelsize] {$J$} (10); 
      \draw [->] (10) to node[auto,swap,labelsize] {$U$} (11); 
\end{tikzpicture}
\end{equation}
commute.

The key observation is that $\enGph{J}$ exhibits  $\enGph{(\WkCat{\omega})}$ as the Kleisli category of the comonad $\enGph{Q}$ on $\enGph{(\WkCats{\omega})}$.
{This is a consequence of the following facts. First, an adjunction $F\dashv U\colon \cat{C}\to \cat{D}$ in $\CAT$ is isomorphic to the Kleisli adjunction for the comonad $FU$ on the category $\cat{C}$ (hence  in particular $\cat{D}$ is isomorphic to the  Kleisli category of $FU$), if and only if $U$ is bijective on objects. Next, the 2-functor $\enGph{(-)}\colon\CAT\longrightarrow\CAT$ preserves (adjunctions and) bijective-on-objects functors. Hence $\enGph{(-)}$ preserves the Kleisli categories of comonads (as well as of monads).}

So \eqref{eqn:extension_U} is a map between Kleisli categories, and in order to obtain $U$, it suffices to equip $\Us$ with the structure of a comonad opfunctor \cite{Street_FTM}, i.e., a suitable natural transformation $\alpha$ as in
\[
\begin{tikzpicture}[baseline=-\the\dimexpr\fontdimen22\textfont2\relax ]
      \node(00) at (0,1) {$\WkCats{\omega}$};
      \node(01) at (4.5,1) {$\enGph{(\WkCats{\omega})}$};
      \node(10) at (0,-1) {$\WkCats{\omega}$};
      \node(11) at (4.5,-1) {$\enGph{(\WkCats{\omega})}$};
      
      \draw [->] (00) to node[auto, labelsize] {$\Us$} (01); 
      \draw [->] (01) to node[auto, labelsize] {$\enGph{Q}$} (11); 
      \draw [->] (00) to node[auto,swap,labelsize] {$Q$} (10); 
      \draw [->] (10) to node[auto,swap,labelsize] {$\Us$} (11); 

      \draw [2cell] (3.25,0) to node[auto,swap,labelsize] {$\alpha$} (1.25,0); 
\end{tikzpicture}
\]
respecting the structures of the comonads $Q$ and $\enGph{Q}$.
The natural transformation $\alpha$ consists of, for each weak $\omega$-category $\cat{A}$, a morphism
\[
\alpha_\cat{A}\colon (\enGph{Q})\Us\cat{A}\longrightarrow \Us Q\cat{A}
\]
in $\enGph{(\WkCats{\omega})}$. 
We define $\alpha_\cat{A}$ to be the identity on objects. So it remains to define, for each pair $(x,y)$ of objects of $\cat{A}$, a strict $\omega$-functor
\[
(\alpha_\cat{A})_{x,y}\colon Q(\cat{A}(x,y))\longrightarrow (Q\cat{A})(x,y).
\]

We induce $(\alpha_\cat{A})_{x,y}$ from the initiality used to determine $Q(\cat{A}(x,y))$. 
Recall that, by definition, the weak $\omega$-category $Q(\cat{A}(x,y))$ is equipped with a strict $\omega$-functor $\varepsilon_{\cat{A}(x,y)}\colon Q(\cat{A}(x,y))\longrightarrow\cat{A}(x,y)$ and a contraction $\kappa_{\cat{A}(x,y)}$ thereon, such that $(\varepsilon_{\cat{A}(x,y)},\kappa_{\cat{A}(x,y)})$ is the initial object of $\Contr_{\cat{A}(x,y)}$. 
On the other hand, we also have a strict $\omega$-functor $\varepsilon_\cat{A}\colon Q\cat{A}\longrightarrow\cat{A}$ equipped with a contraction $\kappa_\cat{A}$.
We obtain a strict $\omega$-functor $(\varepsilon_\cat{A})_{x,y}\colon (Q\cat{A})(x,y)\longrightarrow \cat{A}(x,y)$ as a part of its action on homs.
Moreover, it is easy to see that the contraction $\kappa_\cat{A}$ restricts to give a contraction $(\kappa_\cat{A})_{x,y}$ on $(\varepsilon_\cat{A})_{x,y}$.
So we obtain an object $((\varepsilon_\cat{A})_{x,y},(\kappa_\cat{A})_{x,y})$ of $\Contr_{\cat{A}(x,y)}$.
Now define the strict $\omega$-functor $(\alpha_\cat{A})_{x,y}$ as the unique morphism $(\varepsilon_{\cat{A}(x,y)},\kappa_{\cat{A}(x,y)})\longrightarrow ((\varepsilon_\cat{A})_{x,y},(\kappa_\cat{A})_{x,y})$ from the initial object in $\Contr_{\cat{A}(x,y)}$.
One can check that $(\Us,\alpha)$ is a comonad opfunctor by a straightforward calculation.

\section{Restriction to weak \texorpdfstring{$\omega$}{omega}-groupoids}
\label{sec:wk_omega_groupoid}
We have shown that a weak $\omega$-category $\cat{A}$ has weak $\omega$-categories $\cat{A}(x,y)$ as homs. 
In this section we briefly sketch that if $\cat{A}$ is a weak $\omega$-\emph{groupoid}, then so are the homs.

Weak $\omega$-groupoids are weak $\omega$-categories in which each $k$-cell ($k\geq 1$) is weakly invertible, so we start with a definition of weakly invertible cell in a weak $\omega$-category. 
In fact, one can define weakly invertible cells in any $\omega$-graph equipped with suitable identity and binary composition operations.

\begin{definition}[{\cite[Definition 1]{Cheng_dual}}]
	An \defemph{$\omega$-precategory} is an $\omega$-graph $P$ equipped with the following structure:
\begin{enumerate}
	\item for any $k\geq 1$ and a $(k-1)$-cell $a$ of $P$, a specified $k$-cell $\id_{a}\colon a\longrightarrow a$;
	\item for any $k\geq 1$ and a pair of $k$-cells $f\colon a\longrightarrow b$ and $g\colon b\longrightarrow c$ of $P$, a specified $k$-cell $g\circ f\colon a\longrightarrow c$.\qedhere
\end{enumerate}
\end{definition}
Note that an $\omega$-precategory has compositions of $k$-cells only along $(k-1)$-dimensional boundaries.

\begin{definition}[\cite{Cheng_dual,vandenBerg_Garner}]
	Let $P$ be an $\omega$-precategory. The set of \defemph{weakly invertible} cells in $P$ is the set of cells of dimension $\geq 1$ defined \emph{coinductively} as follows: for $k\geq 1$, a $k$-cell $f\colon a\longrightarrow b$ of $P$ is weakly invertible if and only if there exist a $k$-cell $g\colon b\longrightarrow a$ and weakly invertible $(k+1)$-cells $\eta\colon \id_{a}\longrightarrow g\circ f$ and $\varepsilon \colon f\circ g\longrightarrow \id_b$.
\end{definition}
Since this is a coinductive definition, in order to show that a $k$-cell $f$ is weakly invertible, it suffices to exhibit a set $W$ of cells in $P$ (called a set of \emph{witnesses} in \cite[Definition 6]{Cheng_dual}) such that $f\in W$ and, for any $f'\colon a'\longrightarrow b'$ in $W$, there exist cells $g'\colon b'\longrightarrow a',\eta'\colon \id_{a'}\longrightarrow g'\circ f'$ and $\varepsilon'\colon f'\circ g' \longrightarrow \id_{b'}$ in $W$.

Each weak $\omega$-category $\cat{A}$ has a canonical $\omega$-precategory structure underlying it. To see this, it suffices to find $k$-cells $i_k$ and $m_k$ in $L$ ($k\geq 1$) of suitable arities, for then we can use their interpretations on the underlying $\omega$-graph $|\cat{A}|$ to define an $\omega$-precategory structure on $|\cat{A}|$.
So we first introduce $k$-cells $0_k$ and $2_k$ in $T^{(\omega)}1$ ($k\geq 1$) which are the arities of $i_k$ and $m_k$ respectively (cf.~\cite[Section 2.1]{vandenBerg_Garner}). For convenience, we also introduce a $k$-cell $1_k$ in $T^{(\omega)}1$ ($k\geq 0$).
{In low dimensions, these cells represent the following globular pasting schemes (cf.~\eqref{eqn:globular_pasting_shcemes}).}
\[
\begin{tikzpicture}[baseline=-\the\dimexpr\fontdimen22\textfont2\relax ]
      \node(21) at (0,0) {$\bullet$};
      \node(l1) at (0,-0.5) {$1_0$};
\end{tikzpicture}
\qquad
\begin{tikzpicture}[baseline=-\the\dimexpr\fontdimen22\textfont2\relax ]
      \node(11) at (0.75,2) {$\bullet$};
      \node(l0) at (0.75,1.5) {$0_1$};
      \node(21) at (0,0) {$\bullet$};
      \node(22) at (1.5,0) {$\bullet$};
      \node(l0) at (0.75,-0.5) {$1_1$};
      \node(31) at (-0.5,-2) {$\bullet$};
      \node(32) at (0.75,-2) {$\bullet$};
      \node(33) at (2,-2) {$\bullet$};
      \node(l0) at (0.75,-2.5) {$2_1$};
      
      \draw [->]  (21) to  (22);      

      \draw [->]  (31) to (32);
      \draw [->] (32) to  (33);
\end{tikzpicture}
\qquad
\begin{tikzpicture}[baseline=-\the\dimexpr\fontdimen22\textfont2\relax ]
      \node(11) at (0,2) {$\bullet$};
      \node(12) at (1.5,2) {$\bullet$};
      \node(l0) at (0.75,1.5) {$0_2$};
      \node(21) at (0,0) {$\bullet$};
      \node(22) at (1.5,0) {$\bullet$};
      \node(l0) at (0.75,-0.6) {$1_2$};
      \node(31) at (0,-2) {$\bullet$};
      \node(32) at (1.5,-2) {$\bullet$};
      \node(l0) at (0.75,-2.8) {$2_2$};
      
      \draw [->]  (11) to (12);
      
      \draw [->,bend left=30]  (21) to node (2u) {} (22);      
      \draw [->,bend right=30] (21) to node (2b) {} (22); 
      \draw [->] (2u) to (2b);

      \draw [->,bend left=50]  (31) to node (3u) {} (32);
      \draw [->] (31) to node (3m) {} (32);
      \draw [->,bend right=50] (31) to node (3b) {} (32); 
      \draw [->] (3u) to (3m);
      \draw [->] (3m) to (3b);
\end{tikzpicture}
\qquad
\begin{tikzpicture}[baseline=-\the\dimexpr\fontdimen22\textfont2\relax ]
      \node(11) at (0,2) {$\bullet$};
      \node(12) at (1.5,2) {$\bullet$};
      \node(l0) at (0.75,1.4) {$0_3$};
      \node(21) at (0,0) {$\bullet$};
      \node(22) at (1.5,0) {$\bullet$};
      \node(l0) at (0.75,-0.7) {$1_3$};
      \node(31) at (-0.5,-2) {$\bullet$};
      \node(32) at (2,-2) {$\bullet$};
      \node(l0) at (0.75,-2.8) {$2_3$};
      
      \draw [->,bend left=30]  (11) to node (1u) {} (12);      
      \draw [->,bend right=30] (11) to node (1b) {} (12); 
      \draw [->] (1u) to (1b);
      
      \draw [->,bend left=40]  (21) to node (2u) {} (22);      
      \draw [->,bend right=40] (21) to node (2b) {} (22); 
      \draw [->,transform canvas={xshift=-0.45em},bend right=30] (2u) to node (2c) {} (2b);
      \draw [->,transform canvas={xshift=0.45em},bend left=30] (2u) to node (2d) {} (2b);
      \draw [->] (0.6,0) to (0.9,0);

      \draw [->,bend left=30]  (31) to node (3u) {} (32);
      \draw [->,bend right=30] (31) to node (3b) {} (32); 
      \draw [->,transform canvas={xshift=-1.1em},bend right=30] (3u) to (3b);
      \draw [->] (3u) to (3b);
      \draw [->,transform canvas={xshift=1.1em},bend left=30] (3u) to (3b);
      \draw [->] (0.3,-2) to (0.6,-2);
      \draw [->] (0.9,-2) to (1.2,-2);
\end{tikzpicture}
\] 
Formally, {in terms of lists (cf.~\eqref{eqn:Tomega-globular-set}), we define recursively $1_0=\bullet$, $1_{k+1}=[1_k]$, $0_1=[\,]$, $0_{k+1}=[0_k]$, $2_1=[\bullet,\bullet]$ and $2_{k+1}=[2_k]$.}
Note that we have $0_k\colon 1_{k-1}\longrightarrow 1_{k-1}$ and $2_k\colon 1_{k-1}\longrightarrow 1_{k-1}$ in $T^{(\omega)}1$.

In order to define the cells $i_k$ and $m_k$ of $L$, we use the contraction $\kappa$ associated with $L$. Note that the unit morphism $e\colon I\longrightarrow (L,\ar{L})$ of the $T^{(\omega)}$-operad $L$ yields a $k$-cell $e_k$ in $L$ of arity $1_k$ for each $k\in\N$. 
We define $i_k=\kappa(k, (e_{k-1},e_{k-1}), 0_k)$ and $m_k=\kappa(k, (e_{k-1},e_{k-1}), 2_k)$. 

We define the \defemph{weakly invertible} cells in a weak $\omega$-category to be the weakly invertible cells in its underlying $\omega$-precategory, and define a \defemph{weak $\omega$-groupoid} to be a weak $\omega$-category in which all cells of dimension $\geq 1$ are weakly invertible \cite{Cheng_dual,vandenBerg_Garner}.

We claim that for any weak $\omega$-groupoid $\cat{A}$ and pair of objects $x$ and $y$, the hom weak $\omega$-category $\cat{A}(x,y)$ given by the forgetful functor $\Us$ of Section \ref{sec:forgetful_strict} (or equivalently, by $U$ of Section \ref{sec:forgetful_weak}) is again a weak $\omega$-groupoid. 
In order to show this, it suffices to show that the canonical identity and binary composition operations in $\cat{A}(x,y)$ agrees with those in $\cat{A}$.
This follows from the construction in Section \ref{sec:forgetful_strict}; since the morphism $\nameof{L}\longrightarrow L$ in \eqref{eqn:outline_operad} preserves contractions and units, for each $k\geq 2$ it maps the $k$-cell $i_{k-1}$ (resp.~$m_{k-1}$) in $\nameof{L}$ to the $k$-cell $i_{k}$ (resp.~$m_k$) in $L$.

If we denote by $\WkGpds{\omega}$ (resp.~$\WkGpd{\omega}$) the full subcategory of $\WkCats{\omega}$ (resp.~$\WkCat{\omega}$) consisting of all weak $\omega$-groupoids,
then it follows that the forgetful functor $\Us$ (resp.~$U$) restricts to
\[
\WkGpds{\omega}\longrightarrow \enGph{(\WkGpds{\omega})}
\qquad
(\text{resp.~}\WkGpd{\omega}\longrightarrow\enGph{(\WkGpd{\omega})}).
\]

\bibliographystyle{plain}
\bibliography{mybib}

\end{document}